\newtheorem{theorem}{Theorem}[section]
\newtheorem{remark}[theorem]{Remark}
\newtheorem{lemma}[theorem]{Lemma}
\newtheorem{proposition}[theorem]{Proposition}
\newcommand{\R}{\mathbb{R}}
\newcommand{\C}{\mathbb{C}}
\newcommand{\cQ}{\mathcal{Q}}
\newcommand{\im}{\mathrm{Im}}
\newcommand{\norm}[1]{\|#1\|}
\newcommand{\abs}[1]{|#1|}
\begin{document}
\title[Identification of unbounded electric potentials through asymptotic boundary spectral data]
{Identification of unbounded electric potentials through asymptotic boundary spectral data}
%
%
\address{Université de Tunis El Manar, \'Ecole Nationale d'Ingénieurs de Tunis, ENIT-LAMSIN, B.P. 37, 1002 Tunis, Tunisia}
\email{mourad.bellassoued@enit.utm.tn}
%
\author[M. Bellassoued,  Y. Kian, Y. Mannoubi, \'E. Soccorsi]{Mourad Bellassoued,  Yavar Kian, Yosra Mannoubi, \'Eric Soccorsi}
\address{Université de Tunis El Manar, \'Ecole Nationale d'Ingénieurs de Tunis, ENIT-LAMSIN, B.P. 37, 1002 Tunis, Tunisia}  
\email{yosra.mannoubi@enit.utm.tn} 

\address{Aix-Marseille Univ., Université de Toulon, CNRS, CPT, Marseille, France}

\email{yavar.kian@univ-amu.fr}
 
\address{Aix-Marseille Univ., Université de Toulon, CNRS, CPT, Marseille, France}

\email{eric.soccorsi@univ-amu.fr}


\maketitle               

\begin{abstract}             
We prove that the real-valued electric potential $q \in L^{\max(2,3 n \slash 5)}(\Omega)$ of the Dirichlet Laplacian
$-\Delta +q$ acting in a bounded domain $\Omega \subset \mathbb{R}^n$, $n \ge 3$, is uniquely determined by the asymptotics of the eigenpairs formed by the eigenvalues and the boundary observation of the normal derivative of the eigenfunctions. 
\end{abstract}

\section{Introduction}
\subsection{Statement of the main result}
Let $\Omega \subset \mathbb{R}^n$, $n \geq 3$, be a bounded domain with $\mathcal C^2$ boundary $\Gamma=\partial\Omega$. 
Let $q \in \cQ_c := \left\{ q \in L^{\max(2,3n \slash 5)}(\Omega,\R)\ \mbox{s. t.}\ q(x) \geq -c,\ x \in \Omega \right\}$, where $c$ is an {\it a priori} fixed positive constant. We consider the perturbed Dirichlet Laplacian $A_q=-\Delta +q$ in $L^2(\Omega)$, i.e., the self-adjoint operator generated in $L^2(\Omega)$ by the 
closed Hermitian form
\begin{equation}
\label{def-aq}
a_q(u,v)= \int_\Omega \left( \nabla u \cdot \nabla \overline{v} + q u \overline{v} \right) dx,\ u, v \in D(a_q):=H_0^1(\Omega),
\end{equation}
see Appendix \ref{app-A} . 
Since the embedding $H_0^1(\Omega) \subset L^2(\Omega)$ is compact, the operator $A_q$ has a compact resolvent and there exist a sequence of eigenfunctions $\phi_k \in D(A_q)=\{ u \in H_0^1(\Omega),\ (-\Delta+q) u \in L^2(\Omega) \}$ which form an orthonormal basis of $L^2(\Omega)$, and a sequence of eigenvalues
$$-\infty<\lambda_{1}\leq \lambda_{2}\leq \ldots \leq \lambda_{k} \leq \lambda_{k+1} \leq \ldots$$
satisfying $\lim_{k \to +\infty} \lambda_k=+\infty$ and
\begin{equation}
\label{egn}
A_q \phi_k=\lambda_k \phi_k,\ k \geq 1.
\end{equation}
For $k \geq 1$, we set $\psi_k:=(\partial_\nu \phi_k)_{| \Gamma}$, where $\nu$ denotes the outward unit vector to $\Gamma$.

In the present paper we examine the inverse spectral problem of knowing whether knowledge of the asymptotic behavior (with respect to $k$) of the boundary spectral data $\{(\lambda_k,\psi_k),\ k \geq 1\}$ uniquely determines $q$.

The study of inverse spectral problems goes back at least to 1929 and Ambarsumian's pioneer article \cite{A}. Later on, Borg \cite{Bo}, Levinson \cite{L}, and Gel'fand and Levitan  \cite{GL} proved that knowledge of the spectrum and additional spectral data uniquely determines the electric potential of one dimensional Schr\"odinger operators. 

Gel'fand and Levitan's result was adapted to the multi-dimensional case by Nachman, Sylvester and Uhlmann in \cite{NSU}, where the potential was identified through the eigenpairs formed by the eigenvalues and the boundary measurement of the normal derivative of the eigenfunctions of the Dirichlet Laplacian. While full knowledge of the boundary spectral data was requested by \cite{NSU}, Isozaki retrieved the potential in \cite{I} when finitely many eigenpairs remain unknown. Further downsizing the data, Choulli and Stefanov established in \cite{CS} that asymptotic knowledge of the boundary spectral data is enough to recover the potential. This result was improved by \cite{KKS,S} upon weakening the condition imposed on the asymptotic spectral data. The analysis carried out in \cite{KKS,S} was extended to magnetic Schr\"odinger operators in \cite{Ki1} and to Riemannian manifolds in \cite{BCFKS}. In the above mentioned articles, the measurement of the Neumann data is performed on the entire boundary of the domain. We refer the reader to \cite{BM} for a Borg-Levinson theorem with partial boundary measurement of the normal derivative of the eigenfunctions.  

The stability issue in the inverse problem of determining the electric potential from boundary spectral data was first treated by Alessandrini and Sylvester in \cite{AS}. We refer the reader to \cite{Ch,BCY,BD,CK1,CS,KKS} for the analysis of this problem under various conditions on both the unknown potential and the boundary spectral data.  

All the above mentioned results were obtained for Schr\"odinger operators with regular coefficients. Actually, there is only a small number of mathematical papers studying inverse spectral problems with singular coefficients. In \cite{PS}, P\"avarinta and Serov retrieved unknown potentials in $L^p(\Omega, \mathbb{R})$, $p >n/2$, from the full boundary spectral data. More recently, in \cite{P}, Pohjola showed unique determination of the electric potential in $L^{n/2}(\Omega,\mathbb{R})$ from either full boundary spectral data when $n=3$ or incomplete boundary spectral data when $n \geq 4$, and of an unknown potential in $L^{p}(\Omega,\mathbb{R})$ with $p >n/2$ and $n=3$, from incomplete boundary spectral data. As far as we know, there is no result available in the mathematical literature, dealing with the identification of a singular potential by asymptotic knowledge of boundary spectral data similar to the ones used in \cite{BCFKS,CS,KKS,Ki1}.


\subsection{Main result}

The main result of this article is as follows.
\begin{theorem}
\label{t1}
Let $q_j\in \cQ_c$, $j=1,2$, for some constant $c>0$. Denote by $\{(\lambda_{j,k},\psi_{j,k}),\ k \geq 1 \}$ the boundary spectral data of the operator $A_{q_j}$ and assume that
\begin{equation}
\label{t1a}   \lim_{k \rightarrow \infty} (\lambda_{1,k}-\lambda_{2,k})=0 \; \;\text{and}\;\; \sum_{k=1}^{+ \infty} \norm{\psi_{1,k}-\psi_{2,k}}_{L^2(\Gamma)}^2 <\infty.
\end{equation}
Then, we have $q_1=q_2$.
\end{theorem}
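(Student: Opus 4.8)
The plan is to recast the inverse spectral problem as a Calder\'on-type problem for the Dirichlet-to-Neumann (DN) maps attached to $A_{q_1}$ and $A_{q_2}$, and then to use complex geometric optics (CGO) solutions tailored to the singular class $\cQ_c$. Fix a real number $\lambda$ lying below both spectra. For $f$ in a suitable trace space the problem $(-\Delta+q_j-\lambda)u_j=0$ in $\Omega$, $u_{j|\Gamma}=f$, is then uniquely solvable, and I set $\Lambda_{q_j}(\lambda)f:=(\partial_\nu u_j)_{|\Gamma}$. A preliminary point, to be handled through the elliptic regularity theory underlying Appendix \ref{app-A}, is that for $q_j\in\cQ_c$ the eigenfunctions are regular enough that the traces $\psi_{j,k}=(\partial_\nu\phi_{j,k})_{|\Gamma}$ lie in $L^2(\Gamma)$ and that $\Lambda_{q_j}(\lambda)$ is bounded between the relevant Sobolev spaces on $\Gamma$.

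First I would express $\Lambda_{q_j}(\lambda)$ through the boundary spectral data. Expanding the resolvent $(A_{q_j}-\lambda)^{-1}$ in the eigenbasis $\{\phi_{j,k}\}$ and using Green's formula produces a representation of the bilinear form of $\Lambda_{q_j}(\lambda)$ whose generic term is $\frac{\langle f,\psi_{j,k}\rangle_{L^2(\Gamma)}\,\overline{\langle g,\psi_{j,k}\rangle_{L^2(\Gamma)}}}{\lambda_{j,k}-\lambda}$. Subtracting the two representations and splitting each term so as to isolate the eigenvalue gap $\lambda_{1,k}-\lambda_{2,k}$ and the trace gap $\psi_{1,k}-\psi_{2,k}$, I would estimate the resulting series by means of the two hypotheses in \eqref{t1a}, the Weyl asymptotics $\lambda_{j,k}\sim c_n k^{2\slash n}$, and the a priori trace bound $\norm{\psi_{j,k}}_{L^2(\Gamma)}\lesssim\lambda_{j,k}^{1\slash 2}$. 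The two conditions in \eqref{t1a} are exactly what makes the series converge and tend to $0$ as $\lambda\to-\infty$, so I expect to obtain that $\Lambda_{q_1}(\lambda)-\Lambda_{q_2}(\lambda)$ becomes negligible in that limit.

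Next I would build CGO solutions for the singular potentials and invoke the Alessandrini identity
\[
\int_\Omega(q_1-q_2)\,u_1u_2\,dx=\big\langle\big(\Lambda_{q_1}(\lambda)-\Lambda_{q_2}(\lambda)\big)u_{1|\Gamma},u_{2|\Gamma}\big\rangle_{L^2(\Gamma)}.
\]
For a fixed frequency $\xi\in\R^n$ I choose $\rho_1,\rho_2\in\C^n$ with $\rho_j\cdot\rho_j=-\lambda$, $\rho_1+\rho_2=i\xi$ and $\abs{\rho_j}\to\infty$, and I seek $u_j=e^{\rho_j\cdot x}(1+w_j)$ solving $(-\Delta+q_j-\lambda)u_j=0$, the corrector $w_j$ solving the conjugated equation $(-\Delta-2\rho_j\cdot\nabla)w_j=-q_j(1+w_j)$. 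Here the threshold $\max(2,3n\slash5)$ is decisive: combining the Kenig--Ruiz--Sogge $L^p$ bounds for the Faddeev-type parametrix $(-\Delta-2\rho_j\cdot\nabla)^{-1}$ with the Sobolev embeddings available for $q_j\in L^{\max(2,3n\slash5)}(\Omega)$, I would show $w_j\to 0$ in a norm dual to the integrability of $q_1-q_2$, so that the left-hand side converges to $\widehat{(q_1-q_2)}(\xi)$. Since $\xi$ is arbitrary, vanishing of the right-hand side would force $\widehat{(q_1-q_2)}\equiv 0$, i.e. $q_1=q_2$.

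The hard part, and the crux of the whole argument, is to make the right-hand side of the identity vanish. Because $\rho_j\cdot\rho_j=-\lambda>0$ forces $\abs{\mathrm{Re}\,\rho_j}\ge\abs{\lambda}^{1\slash2}$, the boundary traces $u_{j|\Gamma}$ grow exponentially in $\abs{\lambda}^{1\slash2}$, whereas the series estimate of the first step only yields polynomial decay of $\Lambda_{q_1}(\lambda)-\Lambda_{q_2}(\lambda)$; a crude bound by the operator norm is therefore hopeless. Overcoming this requires exploiting the explicit spectral and meromorphic structure of the DN difference rather than its size alone: one rewrites the pairing through the identity $\langle u_{j'|\Gamma},\psi_{j,k}\rangle_{L^2(\Gamma)}=(\lambda-\lambda_{j,k})\langle u_{j'},\phi_{j,k}\rangle_{L^2(\Omega)}$ and tracks the cancellations afforded by \eqref{t1a}, in the spirit of Isozaki's analytic continuation in $\lambda$ \cite{I} and the asymptotic Borg--Levinson technique of \cite{CS,KKS}. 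Carrying out this cancellation analysis simultaneously with the sharp CGO remainder estimates in the low-regularity class $\cQ_c$ is where the genuine difficulty lies, and where the exponent $3n\slash5$ ultimately originates.
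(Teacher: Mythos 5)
You have correctly identified the two structural ingredients of any proof here --- a spectral representation of boundary data and a Fourier-transform identity obtained by probing the equation with special solutions --- but your plan founders exactly at the point you yourself flag, and the remedy you gesture at is not the one that works. With $\rho_j\cdot\rho_j=-\lambda$ and $\lambda\to-\infty$ real, the boundary traces of $u_j=e^{\rho_j\cdot x}(1+w_j)$ grow like $e^{c\abs{\lambda}^{1/2}}$, whereas hypothesis \eqref{t1a} is purely asymptotic: the two DN maps are never equal for any fixed $\lambda$, and the smallness one can extract from the tail conditions is only algebraic in $1\slash\abs{\lambda}$ (compare Lemma \ref{lemma 2.2}, which yields a rate $O(\abs{\lambda}^{-\epsilon/2})$, and only for a \emph{fixed, bounded} Dirichlet datum). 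Exponential growth times algebraic decay diverges, so the Alessandrini pairing cannot be forced to vanish along the real ray; deferring this to an unspecified ``cancellation analysis'' is not a proof step, and it mischaracterizes Isozaki's method, which is not a rescue of growing CGOs. What the paper does (Section \ref{sec-Isozaki}) is abandon exponentially growing solutions altogether: it probes at the \emph{complex} energies $\lambda_\tau^\pm=(\tau\pm i)^2$, $\tau\to+\infty$, with test functions $f_\tau^\pm(x)=e^{i(\tau\pm i)\eta_\tau^\pm\cdot x}$ that are bounded uniformly in $\tau$ ($\abs{f_\tau^\pm(x)}\le e^{\abs{x}}$) and satisfy $f_\tau^+\overline{f_\tau^-}\to e^{-i\xi\cdot x}$; the quantitative decay then comes not from $\lambda\to-\infty$ but from the resolvent bound $\norm{(A_{q_j}-\lambda_\tau^+)^{-1}}\le(2\tau)^{-1}$, i.e., from the imaginary part of the energy. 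The real limit $\mu\to-\infty$ enters only through Lemma \ref{lemma 2.2}, applied with the fixed bounded datum $f_\tau^+$, to remove the potential-dependent term $\langle \partial_\nu u^+_{1,\mu}-\partial_\nu u^+_{2,\mu},f_\tau^-\rangle_{L^2(\Gamma)}$; it is never paired against growing solutions.

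Two further gaps. First, your spectral representation of a \emph{single} DN map, with generic term $\langle f,\psi_{j,k}\rangle_{L^2(\Gamma)}\overline{\langle g,\psi_{j,k}\rangle}_{L^2(\Gamma)}\slash(\lambda_{j,k}-\lambda)$, does not converge in general: with the trace bound available for $q\in\cQ_c$, namely $\norm{\psi_{j,k}}_{L^2(\Gamma)}\le C(1+\abs{\lambda_{j,k}})$ from \eqref{tm*}, the individual terms need not even tend to zero. This is precisely why the paper only expands the difference of two spectral parameters, $\partial_\nu(u_\lambda-u_\mu)$, whose series (Lemma \ref{l5}) carries the product $(\lambda-\lambda_k)(\mu-\lambda_k)$ in the denominator. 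Second, the fact that $\psi_{j,k}\in L^2(\Gamma)$ at all is not a routine consequence of Appendix \ref{app-A}: for unbounded $q$ the eigenfunctions are in general not in $H^2(\Omega)$, and proving this trace regularity with a quantitative bound is a substantive part of the paper (Proposition \ref{pr1}, established by smooth approximation of $q$ together with a Rellich-type identity); your proposal assumes it without argument. In short, the skeleton is recognizable, but the step you single out as ``the crux'' is genuinely missing, and the known way to fill it requires replacing your choice of probe solutions, not refining their estimates.
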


To the best of our knowledge, Theorem \ref{t1} contains the first Borg-Levinson identification result of a singular potential by asymptotic knowledge of the boundary spectral data. Notice that \eqref{t1a} is the exact same condition requested by \cite{KKS,S} on the boundary spectral data, in the identification of a bounded electric potential. It turns out that this condition still applies to the determination of a singular potential lying in $L^{3n/5}(\Omega)$ when $n \ge 4$ or in $L^2(\Omega)$ when $n=3$. Actually, Theorem \ref{t1} goes one step further than \cite{P} in downsizing the boundary spectral used for recovering a singular potential, but this is at the expense of greater regularity for the admissible unknown coefficient ($L^2(\Omega)$ instead of $L^p(\Omega)$, $p>3 \slash 2$ when $n=3$ and $L^{3n \slash 5}(\Omega)$ instead of $L^{n\slash 2}(\Omega)$ when $n \ge 4$).  

There are two main ingredients in the proof of Theorem \ref{t1}. The first one is a suitable $L^2(\Gamma)$-estimate of the normal derivative of any $H^1(\Omega)$-solution to the Laplace equation. It is given in Proposition \ref{pr1} and generalizes a classical elliptic regularity result to the case
of $L^{3n \slash 5}(\Omega)$-potentials. The second one is an adaptation of Isozaki's representation formula, presented in \cite{I}, to the framework of singular potentials lying in $\cQ_c$.

\subsection{Outline}
This paper is organized as follows. In Section \ref{sec-pre}, we establish several technical results which are useful for the proof of Theorem \ref{t1}. In Section \ref{sec-Isozaki} we adapt the celebrated Isozaki's representation formula to the case of singular potentials. Finally, Section \ref{sec-proof} contains to the proof of Theorem \ref{t1} and the Appendix is devoted to the definition of $A_q$.

\section{Preliminaries}
\label{sec-pre}

Let $q\in L^{3n/5}(\Omega)$ be real-valued. For $\lambda \in \mathbb{C}$ and $f \in H^{3/2}(\Gamma)$, we consider the following boundary value problem (BVP)
 \begin{equation}
 \label{1}  
 \left\{  
\begin{array}{ll}
 (-\Delta +q-\lambda)u=0 &\text{in}\; \Omega\\
   u=f &\text{on}\;\Gamma. \\          
 \end{array}                     
\right. 
\end{equation}               
As will appear in the remaining part of this article, taking $f$ in $H^{3/2}(\Gamma)$ is enough for the purpose of this work and we point out that unlike in \cite{P}, where more exotic Dirichlet data were considered, there is no need to go this way in the present paper.

With reference to \cite[Lemma 2.3 and Corollary 2.4]{P}, there exists $\lambda_0>0$ such that for all $\lambda \in\mathbb C\setminus(- \lambda_0,+\infty)$, the BVP \eqref{1} admits a unique solution $u \in W^{2,p}(\Omega)$, where $p=2n/(n+2)$. Moreover, $u$ satisfies the estimate
\begin{equation}\label{u}
\| u \|_{W^{2,p}(\Omega)} \leq C_\lambda \|f\|_{H^{3/2}(\Gamma)}
\end{equation}
for some positive constant $C_\lambda$ which depends on $\lambda$.

In this section we 
aim to study the influence of either the potential $q$ or the spectral parameter $\lambda$, 
on the solution $u$ to \eqref{1}. More precisely, bearing in mind that the trace operator 
\begin{equation}
\label{123}
\begin{array}{ccccc}
\tau_1 & : & W^{2,p}(\Omega) & \longrightarrow & W^{1-1/p,p}(\Gamma) \\
 & & u & \longmapsto & {\partial_\nu u}, 
\end{array}
\end{equation}
is continuous, we shall first examine the dependence of $\partial_\nu u$ with respect to the electric potential when $\lambda$ is sent to $-\infty$, and, in a second time, with respect to the spectral parameter when the potential $q$ is fixed.


\subsection{Influence of the potential in the asymptotic regime $\lambda \to -\infty$}
The result that we have in mind is inspired by \cite[Lemma 2.5]{KKS} and \cite[Lemma 2.3]{S}. For any two potentials  
$q_j \in \cQ_c$, $j=1,2$, where $c>0$ is fixed, it indicates that the two solutions to \eqref{1} associated with either $q = q_1$ or $q = q_2$ are close as $\lambda$ goes to $-\infty$.

\begin{lemma}
\label{lemma 2.2}
Put $p:=2n/(n+2)$, let $f\in  H^{3/2}(\Gamma)$ and pick $q_j \in \cQ_c$, $j=1,2$, for some $c>0$. For $\lambda \in \R \setminus (\mathrm{Sp}(A_{q_1})\cup \mathrm{Sp}(A_{q_2}))$, denote by $u_{j,\lambda}$ the solution to the BVP \eqref{1} with $q=q_j$. Then, we have
\begin{equation}
\label{ll1a}
\lim_{\lambda \to -\infty}\| \partial_\nu u_{1,\lambda}- \partial_\nu u_{2,\lambda} \|_{L^p(\Gamma)} = 0.
\end{equation}
\end{lemma}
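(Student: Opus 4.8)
The plan is to pass to the difference $w_\lambda:=u_{1,\lambda}-u_{2,\lambda}$, reduce the boundary estimate \eqref{ll1a} to a bulk $W^{2,p}(\Omega)$-estimate through the trace operator $\tau_1$ of \eqref{123}, and then exploit the competition between the resolvent of $A_{q_1}$, which is of size $\abs{\lambda}^{-1}$ as $\lambda\to-\infty$, and the boundary-layer decay of the bulk norms of the solutions to \eqref{1}.

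First I would subtract the two copies of \eqref{1}. Since $u_{1,\lambda}$ and $u_{2,\lambda}$ carry the same Dirichlet datum $f$, the function $w_\lambda\in W^{2,p}(\Omega)$ has vanishing trace on $\Gamma$ and solves
\begin{equation*}
(-\Delta+q_1-\lambda)w_\lambda=-(q_1-q_2)u_{2,\lambda}\ \text{in}\ \Omega,\qquad w_\lambda=0\ \text{on}\ \Gamma,
\end{equation*}
that is $w_\lambda=-(A_{q_1}-\lambda)^{-1}F_\lambda$ with $F_\lambda:=(q_1-q_2)u_{2,\lambda}$. Because $\partial_\nu u_{1,\lambda}-\partial_\nu u_{2,\lambda}=\partial_\nu w_\lambda$, the continuity of $\tau_1$ together with the embedding $W^{1-1/p,p}(\Gamma)\hookrightarrow L^p(\Gamma)$ yields
\begin{equation*}
\norm{\partial_\nu u_{1,\lambda}-\partial_\nu u_{2,\lambda}}_{L^p(\Gamma)}\le C\norm{w_\lambda}_{W^{2,p}(\Omega)},
\end{equation*}
so the whole matter is reduced to proving $\norm{w_\lambda}_{W^{2,p}(\Omega)}\to0$.

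Next I would invoke the standard $W^{2,p}$-regularity of the Dirichlet Laplacian on the $\mathcal C^2$ domain $\Omega$ (with a $\lambda$-independent constant), which gives $\norm{w_\lambda}_{W^{2,p}}\le C(\norm{\Delta w_\lambda}_{L^p}+\norm{w_\lambda}_{L^p})$, and then bound $\Delta w_\lambda=(q_1-\lambda)w_\lambda+F_\lambda$ term by term. The key accounting is that every occurrence of $w_\lambda$ is damped by the resolvent: from $\norm{w_\lambda}_{L^2}\le C\abs{\lambda}^{-1}\norm{F_\lambda}_{L^2}$ and the boundedness of $\Omega$ one controls $\abs{\lambda}\,\norm{w_\lambda}_{L^p}\le C\norm{F_\lambda}_{L^2}$, while the energy identity for $a_{q_1}$ (using $q_1\ge-c$ and $\lambda<0$) gives $\norm{w_\lambda}_{H^1}\le C\abs{\lambda}^{-1/2}\norm{F_\lambda}_{L^2}$. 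With the Hölder exponent $b=6n/(3n-4)$ fixed by $1/p=5/(3n)+1/b$, which satisfies $b\le 2n/(n-2)$, the Sobolev embedding then yields $\norm{q_1w_\lambda}_{L^p}\le\norm{q_1}_{L^{3n/5}}\norm{w_\lambda}_{L^b}\le C\abs{\lambda}^{-1/2}\norm{F_\lambda}_{L^2}$. By Hölder with $q_1-q_2\in L^{3n/5}$ one has $\norm{F_\lambda}_{L^2}\le\norm{q_1-q_2}_{L^{3n/5}}\norm{u_{2,\lambda}}_{L^{r}}$ with $r=6n/(3n-10)$, a finite exponent when $n\ge4$, so $\norm{F_\lambda}_{L^2}$ stays bounded (the case $n=3$, where $q_j\in L^2$, being treated identically with adjusted exponents). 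Hence the only term not already carrying a negative power of $\abs{\lambda}$ is $\norm{F_\lambda}_{L^p}\le\norm{q_1-q_2}_{L^{3n/5}}\norm{u_{2,\lambda}}_{L^{b}}$.

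The main obstacle is therefore to show that $\norm{u_{2,\lambda}}_{L^b(\Omega)}\to0$ as $\lambda\to-\infty$, i.e. the boundary-layer decay of the bulk $L^b$-norm of solutions to \eqref{1}; note that $b<\infty$, which is precisely what makes such decay possible. I would obtain it by splitting $u_{2,\lambda}=u_{0,\lambda}+v_{2,\lambda}$, where $u_{0,\lambda}$ solves the free problem $(-\Delta-\lambda)u_{0,\lambda}=0$ in $\Omega$ with $u_{0,\lambda}=f$ on $\Gamma$, and $v_{2,\lambda}=-(A_{q_2}-\lambda)^{-1}(q_2u_{0,\lambda})$. For the free solution, a barrier/comparison argument (or the explicit decay of the free resolvent kernel) shows that $u_{0,\lambda}$ concentrates in an $O(\abs{\lambda}^{-1/2})$-neighbourhood of $\Gamma$, so that $\norm{u_{0,\lambda}}_{L^b}\le C\abs{\lambda}^{-\alpha}\norm{f}_{H^{3/2}(\Gamma)}$ for some $\alpha>0$; the correction $v_{2,\lambda}$ then inherits this decay through the $\abs{\lambda}^{-1}$-sized resolvent of $A_{q_2}$. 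Feeding $\norm{u_{2,\lambda}}_{L^b}\to0$ back into the estimate above forces $\norm{w_\lambda}_{W^{2,p}}\to0$, which is \eqref{ll1a}. I expect the delicate point to be exactly the quantitative boundary-layer decay of $u_{0,\lambda}$ for merely $H^{3/2}(\Gamma)$ data, combined with the rigid Hölder exponents imposed by the low integrability $L^{3n/5}(\Omega)$ of the potentials.
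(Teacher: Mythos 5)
Your reduction to the difference $w_\lambda=u_{1,\lambda}-u_{2,\lambda}$ and the passage through the trace operator \eqref{123} match the paper's starting point, but your quantitative bookkeeping breaks down at the first estimate, and the failure is not a detail: it is exactly where the unboundedness of the potentials bites. You measure the source $F_\lambda=(q_1-q_2)u_{2,\lambda}$ in $L^2(\Omega)$, invoking $\norm{w_\lambda}_{L^2}\le C\abs{\lambda}^{-1}\norm{F_\lambda}_{L^2}$ and an $H^1$ energy bound with $\norm{F_\lambda}_{L^2}$ on the right. However, the only uniform-in-$\lambda$ information available on $u_{2,\lambda}$ is $\norm{u_{2,\lambda}}_{L^{2n/(n-2)}(\Omega)}\le C\norm{f}_{H^{3/2}(\Gamma)}$ (this is \cite[Lemma 3.2]{P}, i.e.\ \eqref{u_2}, and it is also the limiting Sobolev embedding of $W^{2,p}(\Omega)$, $p=2n/(n+2)$, so nothing better can hold). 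H\"older then places $F_\lambda$ only in $L^{6n/(3n+4)}(\Omega)$, and $6n/(3n+4)<2$ for every $n\ge3$ (for $n=3$, even using $q_j\in L^2$, one only gets $L^{3/2}$). Your proposed bound $\norm{F_\lambda}_{L^2}\le\norm{q_1-q_2}_{L^{3n/5}}\norm{u_{2,\lambda}}_{L^r}$ with $r=6n/(3n-10)$ would require $u_{2,\lambda}$ to be bounded in $L^r$ with $r$ strictly above the Sobolev exponent $2n/(n-2)$ (for $n=4$: $r=12$ versus $4$), which is not available. Consequently every step of your scheme carrying a factor $\norm{F_\lambda}_{L^2}$ is unjustified. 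This is precisely why the paper, following \cite[Lemma 3.1]{P}, runs the resolvent decay in $L^p$ with $p=2n/(n+2)$, namely $\norm{w_\lambda}_{L^p}\le C\abs{\lambda}^{-1}\norm{F_\lambda}_{L^p}$: the exponent $p$ is the one for which H\"older pairs $q_1-q_2\in L^{n/2}(\Omega)\supset L^{3n/5}(\Omega)$ with $u_{2,\lambda}\in L^{2n/(n-2)}(\Omega)$.

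There is a second, structural gap: you aim at the stronger conclusion $\norm{w_\lambda}_{W^{2,p}(\Omega)}\to0$, which forces you to prove the decay $\norm{u_{2,\lambda}}_{L^b(\Omega)}\to0$ with $b=6n/(3n-4)$ --- the boundary-layer lemma you only sketch via a barrier argument, and which you yourself identify as the delicate point. The paper shows this entire demand is avoidable. It proves only that $\norm{w_\lambda}_{W^{2,p}(\Omega)}$ stays \emph{bounded} (estimate \eqref{a1} plus the H\"older pairing above), while $\norm{w_\lambda}_{L^p(\Omega)}=O(\abs{\lambda}^{-1})$ by the resolvent estimate (\eqref{a2}), and then interpolates: by \eqref{2.8}, $\norm{\partial_\nu w_\lambda}_{W^{1-1/p-\epsilon,p}(\Gamma)}\le C\norm{w_\lambda}_{L^p}^{\epsilon/2}\norm{w_\lambda}_{W^{2,p}}^{1-\epsilon/2}=O(\abs{\lambda}^{-\epsilon/2})$, after which the embedding $W^{1-1/p-\epsilon,p}(\Gamma)\hookrightarrow L^p(\Gamma)$ concludes. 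No decay of $u_{2,\lambda}$ itself is ever needed, only its uniform $L^{2n/(n-2)}$ bound. To salvage your route you would have to (i) replace every $L^2$ bound on $F_\lambda$ by its $L^p$ counterpart, and (ii) genuinely prove the $L^b$ decay of $u_{2,\lambda}$ (which is doable, e.g.\ from $L^2$ decay interpolated against the bounded $L^{2n/(n-2)}$ norm, but is itself a nontrivial lemma in this singular setting); the paper's interpolation device renders both unnecessary.
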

\begin{proof}
Since the function $u_\lambda = u_{1,\lambda}-u_{2,\lambda}$ solves
$$ 
 \left\{  
\begin{array}{ll}
 (-\Delta +q_1-\lambda)u_\lambda=(q_2-q_1)u_{2,\lambda} &\text{in}\; \Omega\\
   u_\lambda=0 &\text{on}\;\Gamma,\\          
 \end{array}                     
\right. $$ 
we deduce from \cite[Theorem 2.3.3.6]{Gr} that 
\begin{equation}
\label{a1}
\| u_\lambda \|_{W^{2,p}(\Omega)} \leq C \| (q_2-q_1) u_{2,\lambda} \|_{L^p(\Omega)}. 
\end{equation}
Here and in the remaining part of this proof, $C$ denotes a positive constant that may change from line to line, but which is always independent of $\lambda$.

Further, from the continuity of the trace operator $\tau_1$ defined in \eqref{123}, we get for all 
$\epsilon \in\left(0,1-1/p\right)$ that
\begin{align}\label{2.8}
\| \partial_\nu u_\lambda \|_{W^{1-\frac{1}{p}-\epsilon,p}(\Gamma)} &\leq C \| u_\lambda \|_{W^{2-\epsilon,p}(\Omega)}\cr
&\leq C \| u_\lambda \|^{\frac{\epsilon}{2}}_{L^p(\Omega)} \| u_\lambda \|_{W^{2,p}(\Omega)}^{1-\frac{\epsilon}{2}},
\end{align}
upon interpolating between $L^p(\Omega)$ and $W^{2,p}(\Omega)$.
On the other hand, we have
\begin{align}\label{u_2}
 \| u_{2,\lambda} \|_{L^{\frac{2n}{n-2}}(\Omega)} \leq  C\| f \|_{H^{\frac{3}{2}}(\Gamma)}
\end{align}
from \cite[Lemma 3.2.]{P}.

Moreover, we have
$\| u_\lambda \|_{L^p(\Omega)} \leq \frac{C}{| \lambda |} \| (q_2 -q_1) u_{2,\lambda}\|_{L^{p}(\Omega)}$ by virtue of \cite[Lemma 3.1]{P}, and hence
\begin{eqnarray*}
\| u_\lambda \|_{L^p(\Omega)} 
&\leq & \frac{C}{| \lambda |} \| q_2 -q_1 \|_{L^{\frac{n}{2}}(\Omega)} \| u_{2,\lambda} \|_{L^{\frac{2n}{n-2}}(\Omega)} \\
&\leq & \frac{C}{| \lambda |} \| q_2 -q_1 \|_{L^{\frac{3n}{5}}(\Omega)} \| u_{2,\lambda} \|_{L^{\frac{2n}{n-2}}(\Omega)}
\end{eqnarray*}
from \cite[Theorem 2.3.1.5]{Gr} and the H\"older inequality. 
From this and \eqref{u_2}, it then follows that
\begin{equation}
\label{a2} 
\| u_\lambda \|_{L^p(\Omega)} \leq  \frac{C}{| \lambda |}.
\end{equation}
Similarly, using \eqref{a1} we get that
\begin{eqnarray*}
\| u_{\lambda} \|_{W^{2,p}(\Omega)} &\leq & C  \| q_2 -q_1 \|_{L^{\frac{n}{2}}(\Omega)} \| u_{2,\lambda} \|_{L^{\frac{2n}{n-2}}(\Omega)}\\
&\leq & C \| q_2 -q_1 \|_{L^{\frac{3n}{5}}(\Omega)} \| u_{2,\lambda} \|_{L^{\frac{2n}{n-2}}(\Omega)} ,
\end{eqnarray*}
and consequently $\| u_{\lambda} \|_{W^{2,p}(\Omega)} \leq C$ by \eqref{u_2}.
Putting this together with \eqref{2.8} and \eqref{a2}, we find 
that
$$
\| \partial_\nu u_\lambda \|_{W^{1-\frac{1}{p}-\epsilon,p}(\Gamma)}\leq \frac{C}{| \lambda |^{\frac{\epsilon}{2}}}
$$
for all $\epsilon \in\left(0,1-1/p \right)$, which immediately entails that
\begin{equation}
\label{129}
\lim_{\lambda \rightarrow -\infty} \| \partial_\nu u_\lambda \|_{W^{1-\frac{1}{p}-\epsilon,p}(\Gamma)} = 0.
\end{equation}
Since $W^{1-1/p-\epsilon,p}(\Gamma)$ is continuously embedded into $L^p(\Gamma)$ (see e.g., \cite[Sections 1.3.1 and 1.3.3]{Gr}), the desired result follows readily from \eqref{129}.
\end{proof}

Lemma \ref{lemma 2.2} establishes that, in some sense, the influence of the potential is dimmed when the spectral parameter $\lambda$ is sent to $-\infty$. Having seen this, we turn now to the representation of the solution to \eqref{1} 
in terms of $\lambda$ and the boundary spectral data $\{ (\lambda_k,\psi_k),\ k \geq 1\}$ of the operator $A_q$.

\subsection{A representation formula}

Let $u_\mu$ denote the solution to \eqref{1} associated with spectral parameter $\lambda=\mu \in \C \setminus \text{Sp}(A_q)$. 
In this section, we aim to express $\partial_\nu (u_\lambda-u_\mu)$ in terms of $\lambda$, $\mu$ and the boundary spectral data of $A_q$. Prior to doing that, we will establish that all the $\psi_k$'s lie in $L^2(\Gamma)$. This technical result, which is a byproduct of 
Proposition \ref{pr1} below, is a key-point in the derivation of the representation formula of 
$\partial_\nu (u_\lambda-u_\mu)$ given in Lemma \ref{l5}.

Let $F\in L^2(\Omega)$. For $q\in L^\infty(\Omega)$, we recall from the classical elliptic regularity theory (see e.g.) that the BVP
\begin{equation} 
\label{22}
 \left\{
\begin{array}{ll}
( -\Delta +q)u=F  &\text{in} \;\Omega\\
    u=0  &\text{on} \;\Gamma
 \end{array}
\right.
\end{equation}
admits a unique solution $u \in H^2(\Omega)$ satisfying
$$\norm{u}_{H^2(\Omega)}\leq C(\norm{u}_{L^2(\Omega)}+\norm{F}_{L^2(\Omega)}) $$
for some positive constant $C$ which depends only on $\Omega$ and $\norm{q}_{L^\infty(\Omega)}$.
This yields $\partial_\nu u\in L^2(\Gamma)$ and the following estimate
\begin{equation} 
\label{l3a}
\norm{\partial_\nu  u}_{L^2(\Gamma)} \leq C(\norm{u}_{L^2(\Omega)}+\norm{F}_{L^2(\Omega)}),
\end{equation}
where $C$ is another positive constant depending only on $\Omega$ and $\norm{q}_{L^\infty(\Omega)}$. 

However, if $q$ is unbounded, in general there is no such thing as a $H^2(\Omega)$-solution to \eqref{22}. Hence the $L^2(\Gamma)$-regularity of the normal derivative $\partial_\nu u$ and consequently the energy estimate \eqref{l3a} are no longer guaranteed by the standard theory of elliptic PDEs. Nevertheless, we shall establish in the following proposition that these two properties remain valid provided $q$ is taken in $L^{3n \slash 5}(\Omega)$.

\begin{proposition}
\label{pr1}
Let $F\in L^2(\Omega)$ and let $q \in \cQ_c$ for some $c>0$. Let $u\in H^1(\Omega)$ be a solution to \eqref{22}. Then, we have $\partial_\nu  u\in L^2(\Gamma)$ and the estimate \eqref{l3a} holds for some positive constant $C$ depending only on $\Omega$ and $q$.
\end{proposition}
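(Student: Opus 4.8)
The plan is to estimate $\norm{\partial_\nu u}_{L^2(\Gamma)}$ by means of a Rellich--Ne\v{c}as identity, the only genuinely delicate contribution being that of the singular potential. Fix a vector field $\beta\in C^1(\overline\Omega;\R^n)$ with $\beta_{|\Gamma}=\nu$, which exists since $\Gamma$ is $C^2$. For $u\in H^2(\Omega)\cap H^1_0(\Omega)$ the tangential part of $\nabla u$ vanishes on $\Gamma$, so $\nabla u=(\partial_\nu u)\nu$ and $\abs{\nabla u}^2=(\partial_\nu u)^2$ there; integrating $\Delta u\,(\beta\cdot\nabla u)$ by parts twice then yields
\begin{equation}
\label{rellich-plan}
\frac12\int_\Gamma(\partial_\nu u)^2\,d\sigma=\int_\Omega(\Delta u)(\beta\cdot\nabla u)\,dx+\int_\Omega\sum_{i,j}(\partial_i u)(\partial_i\beta_j)(\partial_j u)\,dx-\frac12\int_\Omega(\dive\beta)\abs{\nabla u}^2\,dx.
\end{equation}
When $u$ solves \eqref{22} one has $\Delta u=qu-F$, so the last two volume terms and the $F$-part are controlled by $\norm{\nabla u}_{L^2(\Omega)}$, and everything hinges on the potential term $\int_\Omega qu\,(\beta\cdot\nabla u)\,dx$.

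Two a priori bounds feed \eqref{rellich-plan}. Testing \eqref{22} against $u$ and using $q\ge -c$ gives the energy estimate $\norm{u}_{H^1(\Omega)}\le C\para{\norm{u}_{L^2(\Omega)}+\norm{F}_{L^2(\Omega)}}$ with $C=C(\Omega,c)$, the quantity $\int_\Omega qu^2$ being finite since $q\in L^{3n/5}(\Omega)$ and $u\in L^{2n/(n-2)}(\Omega)$. The second bound is where the exponent $3n/5$ enters: writing $-\Delta u=F-qu$ and combining $L^p$ elliptic regularity with the Sobolev embedding, $q\in L^{3n/5}(\Omega)$ and $u\in L^{2n/(n-2)}(\Omega)$ first place $qu\in L^{6n/(3n+4)}(\Omega)$, hence $u\in W^{2,6n/(3n+4)}(\Omega)$; reinjecting the improved inclusion $u\in L^{6n/(3n-8)}(\Omega)$ and repeating once lands $\nabla u$ in $L^{6n/(3n-4)}(\Omega)$ with $\norm{\nabla u}_{L^{6n/(3n-4)}(\Omega)}\le C\para{\norm{u}_{L^2(\Omega)}+\norm{F}_{L^2(\Omega)}}$, where $C$ depends only on $\Omega$, $c$ and $\norm{q}_{L^{3n/5}(\Omega)}$ and, crucially, never on $\norm{q}_{L^\infty(\Omega)}$.

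Given these, the potential term is handled by the H\"older triple
\[
\left|\int_\Omega qu\,(\beta\cdot\nabla u)\,dx\right|\le C\,\norm{q}_{L^{3n/5}(\Omega)}\,\norm{u}_{L^{2n/(n-2)}(\Omega)}\,\norm{\nabla u}_{L^{6n/(3n-4)}(\Omega)},
\]
the three exponents being conjugate because $\tfrac{5}{3n}+\tfrac{n-2}{2n}+\tfrac{3n-4}{6n}=1$. Thus for an $H^2$-solution the right-hand side of \eqref{rellich-plan} is $\le C\para{\norm{u}_{L^2(\Omega)}+\norm{F}_{L^2(\Omega)}}^2$, i.e. \eqref{l3a}, with a constant independent of $\norm{q}_{L^\infty(\Omega)}$.

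It remains to reach the merely $H^1$ solution $u$, for which \eqref{rellich-plan} and even the membership $\partial_\nu u\in L^2(\Gamma)$ are not a priori licit, since for unbounded $q$ the trace of $\nabla u$ lives only in some $L^r(\Gamma)$ with $r<2$. I would therefore run the above on regularized data: take bounded $q_m\to q$ in $L^{3n/5}(\Omega)$ with $\norm{q_m}_{L^{3n/5}(\Omega)}\le\norm{q}_{L^{3n/5}(\Omega)}$ and $q_m\ge -c$ (symmetric truncation), fix $\mu>c$ so that $-\Delta+q_m+\mu\ge\mu-c>0$ is boundedly invertible uniformly in $m$, and set $u_m:=(-\Delta+q_m+\mu)^{-1}(F+\mu u)$. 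Then $u_m\in H^2(\Omega)\cap H^1_0(\Omega)$ by standard elliptic theory, while $u_m-u=(-\Delta+q_m+\mu)^{-1}[(q-q_m)u]\to 0$ in $W^{2,6n/(3n+4)}(\Omega)$ because $\norm{(q-q_m)u}_{L^{6n/(3n+4)}(\Omega)}\le\norm{q-q_m}_{L^{3n/5}(\Omega)}\norm{u}_{L^{2n/(n-2)}(\Omega)}\to 0$ and the resolvents are uniformly bounded by the bootstrap above. Applying the a priori estimate to each $u_m$ (with potential $q_m+\mu$, whose $L^{3n/5}$-norm and lower bound are uniform) bounds $(\partial_\nu u_m)_m$ in $L^2(\Gamma)$, while the $W^{2,6n/(3n+4)}(\Omega)$-convergence forces $\partial_\nu u_m\to\partial_\nu u$ in some $L^r(\Gamma)$; weak-$L^2(\Gamma)$ compactness and lower semicontinuity of the norm then give $\partial_\nu u\in L^2(\Gamma)$ together with \eqref{l3a}. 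The case $n=3$, where the admissible exponent is $\max(2,3n/5)=2$, follows verbatim with $q\in L^2(\Omega)$, a single bootstrap step yielding $\nabla u\in L^3(\Omega)$ and the conjugate triple $\tfrac12+\tfrac16+\tfrac13=1$. The main obstacle is precisely this integrability upgrade of $\nabla u$: the argument closes only because two applications of elliptic regularity push $\nabla u$ into exactly the space dual to the pairing of $q\in L^{3n/5}(\Omega)$ with $u\in L^{2n/(n-2)}(\Omega)$, and this must be achieved with constants independent of $\norm{q}_{L^\infty(\Omega)}$ for the approximation to survive.
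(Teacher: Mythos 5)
Your argument is correct and essentially reproduces the paper's own proof: the paper likewise reduces to $H^2(\Omega)\cap H^1_0(\Omega)$ approximants by regularizing the potential (it mollifies $q+c$ and solves $(-\Delta+q_\ell)u_\ell=cu+F$, where you truncate $q$ and solve $(-\Delta+q_m+\mu)u_m=F+\mu u$), uses the same Rellich--Ne\v{c}as identity with a $C^1$ extension of $\nu$, bounds the potential term by H\"older, Sobolev embedding and $W^{2,p}$ elliptic regularity with constants depending on $q$ only through $\norm{q}_{L^{\max(2,3n/5)}(\Omega)}$, and concludes exactly as you do, by a uniform $L^2(\Gamma)$ bound, weak compactness, and identification of the weak limit via the strong trace convergence inherited from convergence in $W^{2,2n/(n+2)}(\Omega)$. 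The only deviation is the exponent bookkeeping for the potential term: the paper needs just one elliptic step, using $\abs{\int_\Omega q_\ell u_\ell\,\gamma\cdot\nabla u_\ell\,dx}\le C\norm{q_\ell}_{L^{3n/5}(\Omega)}\norm{u_\ell}_{L^{6n/(3n-8)}(\Omega)}\norm{\nabla u_\ell}_{L^{6n/(3n-2)}(\Omega)}\le C\norm{q_\ell}_{L^{3n/5}(\Omega)}\norm{u_\ell}_{W^{2,6n/(3n+4)}(\Omega)}^2$, whereas you bootstrap twice to place the gradient in $L^{6n/(3n-4)}(\Omega)$ and pair it with the energy bound $u\in L^{2n/(n-2)}(\Omega)$; both H\"older triples sum to one, so either choice closes the estimate.
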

\begin{proof} 
Without loss of generality we assume that $F$ is real-valued in such a way that the solution $u$ to \eqref{22} is real-valued as well.

Since $q+c \ge 0$ by assumption, there exists a sequence $(q_\ell)_{\ell \ge 1} \in \mathcal C^\infty(\overline{\Omega})$ of non-negative functions, such that 
\begin{equation}
\label{eq-s0}
\lim_{\ell \to +\infty} \norm{q_\ell-(q+c)}_{L^{3n/5}(\Omega)} =0,
\end{equation}
and for each $\ell \ge 1$, we consider the solution $u_\ell \in H^2(\Omega)\cap H^1_0(\Omega)$ to the following BVP:
\begin{equation} \label{222}
 \left\{
\begin{array}{ll}
( -\Delta +q_\ell)u_\ell=cu+F  &\text{in} \;\Omega\\
    u_\ell=0  &\text{on} \;\Gamma.\\
 \end{array}
\right.
\end{equation}
The derivation of Proposition \ref{pr1} being quite lengthy,
we split the rest of the proof into four steps. In the first one we establish that the sequence $(u_\ell)_{\ell \ge 1}$ is bounded in $H^1(\Omega)$. The second step is to prove that $(u_\ell)_{\ell \ge 1}$ converges to $u$ in the $W^{2,p}(\Omega)$-norm topology, where $p=2n/(n+2)$. In the third step we show that $(u_\ell)_{\ell \ge 1}$ is bounded in $W^{2,p_1}(\Omega)$, with $p_1=6n/(3n+4)$. Finally, the fourth step contains the end of the proof of Proposition \ref{pr1}.

\noindent{\it Step 1: $(u_\ell)_{\ell\ge 1}$ is bounded in $H^1(\Omega)$.} Let $\ell \in \mathbb N$ be fixed. We multiply the first equation of \eqref{222} by $u_\ell$ and integrate over $\Omega$. We get that
$$\int_\Omega \abs{\nabla u_\ell}^2 dx + \int_\Omega q_\ell \abs{u_\ell}^2 dx =\int_\Omega (cu +F) u_\ell dx,$$
by applying the Green formula. Adding $\int_\Omega (q+c) \abs{u_\ell}^2 dx$ on both sides of the above equality then yields
\begin{equation}
\label{eq-s1.1}
\int_\Omega \abs{\nabla u_\ell}^2 dx + \int_\Omega (q+c) \abs{u_\ell}^2 dx  =\int_\Omega G u_\ell dx- \int_\Omega (q_\ell-(q+c)) \abs{u_\ell}^2 dx,
\end{equation}
where $G=cu +F$. Since $q+c\geq 0$ in $\Omega$ and $u_\ell \in H_0^1(\Omega)$, we infer from \eqref{eq-s1.1} and the Poincar\'e inequality that
$$\norm{u_\ell}^2_{H^1(\Omega)} \leq  C_0 \left( \int_\Omega \abs{q_\ell-(q+c)} \abs{u_\ell}^2 dx +\int_\Omega \abs{G}  \abs{u_\ell} dx \right), $$
where $C_0$ is a positive constant depending only on $\Omega$.
Therefore, for all $\epsilon >0$ we get 
$$
\norm{u_\ell}^2_{H^1(\Omega)} 
\leq  C_0 \left( \norm{q_\ell-(q+c)}_{L^{\frac{3n}{5}}(\Omega)} \norm{u_\ell}^2_{L^{\frac{6n}{3n-5}}(\Omega)}+  \frac{\epsilon}{2} \norm{u_\ell}^2_{L^2(\Omega)} + \frac{\epsilon^{-1}}{2} \norm{G }_{L^2(\Omega)}^2 \right) 
$$
from H\"older's inequality,  
and hence
\begin{equation}
\label{eq-s1.2}
\norm{u_\ell}^2_{H^1(\Omega)} \leq  C_0 \left(  \norm{q_\ell-(q+c)}_{L^{3n/5}(\Omega)} \norm{u_\ell}^2_{H^1(\Omega)}+\frac{\epsilon}{2} \norm{u_\ell}^2_{L^2(\Omega)}+ \frac{\epsilon^{-1}}{2} \norm{G }_{L^2(\Omega)}^2  \right).
\end{equation}
by the Sobolev embedding theorem.

Now, with reference to \eqref{eq-s0}, we pick $\ell_0 \geq 1$ such that
$\norm{q_\ell-(q+c)}_{L^{3n/5}(\Omega)}\leq \epsilon$ for all $\ell\geq\ell_0$. In light of \eqref{eq-s1.2}, this leads to
$$\norm{u_\ell}^2_{H^1(\Omega)} \leq C_0 \left( \epsilon \norm{u_\ell}^2_{H^1(\Omega)} + \epsilon^{-1} \norm{G}_{L^2(\Omega)}^2 \right),\ \ell \geq \ell_0, $$
so by choosing $\epsilon=\frac{1}{2C_0}>0$ in this inequality, we find (upon substituting $\sqrt{2} C_0$ for $C_0$) that
$$ \norm{u_\ell}_{H^1(\Omega)} \leq C_0  \norm{G}_{L^2(\Omega)},\ \ell \ge\ell_0. $$
This together with the identity $G=c u + F$ then yields
\begin{equation} 
\label{l3b}
\norm{u_\ell}_{H^1(\Omega)} \leq C\left( \norm{u}_{L^2(\Omega)} + \norm{F}_{L^2(\Omega)} \right),\ \ell \geq 1,
\end{equation}
where, from now on, $C$ denotes a positive constant depending only on $\Omega$ and $q$, which may change from line to line.

\noindent{\it Step 2: The sequence $(u_\ell)_{\ell \geq 1}$ converges to $u$ in $W^{2,p}(\Omega)$}. For $\ell\ \geq 1$ fixed, we see that the
function $v_\ell :=u-u_\ell\in H^1(\Omega)$ solves
\begin{equation}
\label{eq-s2.1}
 \left\{
\begin{array}{ll}
 -\Delta v_\ell +(q+c)v_\ell=(q_\ell-(q+c))u_\ell  &\text{in} \;\Omega\\
    v_\ell=0  &\text{on} \;\Gamma,
 \end{array}
\right.
\end{equation}
and since $q+c\geq0$, this immediately entails that
\begin{equation}
\label{l3bb}
\norm{v_\ell}_{H^1(\Omega)} \leq C \norm{(q_\ell-(q+c))u_\ell}_{H^{-1}(\Omega)}.
\end{equation}
Next, the space $H^1_0(\Omega)$ being continuously embedded in $L^{2n/(n-2)}(\Omega)$ according to the Sobolev embedding theorem, then, by duality, the space $L^p(\Omega)$ where $p=2n/(n+2)$, is continuously embedded into $H^{-1}(\Omega)$. Thus, it follows from \eqref{l3bb} that
\begin{equation} 
\label{l3cc}
\norm{v_\ell}_{H^1(\Omega)}\leq C\norm{(q_\ell-(q+c))u_\ell}_{L^p(\Omega)}.
\end{equation}
Further, we have
\begin{equation} 
 \left\{
\begin{array}{ll}
 -\Delta v_\ell =-(q+c)v_\ell+(q_\ell-(q+c))u_\ell  &\text{in}\ \Omega\\
    v_\ell=0  &\text{on} \;\Gamma,
 \end{array}
\right.
\end{equation}
by virtue of \eqref{eq-s2.1}, and consequently 
\begin{equation} 
\label{l3dd}
\norm{v_\ell}_{W^{2,p}(\Omega)}\leq C_0 (\norm{(q+c)v_\ell}_{L^p(\Omega)}+\norm{(q_\ell-(q+c))u_\ell}_{L^p(\Omega)}),
\end{equation}
from \cite[Theorem 2.4.2.5]{Gr}. Here and in the remaining part of this proof, $C_0$ denotes a generic positive constant depending only on $\Omega$, which may change from line to line. 

Moreover, since
$$\norm{(q+c)v_\ell}_{L^p(\Omega)}\leq \norm{q+c}_{L^{\frac{n}{2}}(\Omega)}\norm{v_\ell}_{L^{\frac{2n}{n-2}}(\Omega)}\leq C\norm{v_\ell}_{H^1(\Omega)},$$
by H\"older's inequality and Sobolev embedding theorem, we deduce from \eqref{l3cc} and \eqref{l3dd} that
$$\norm{v_\ell}_{W^{2,p}(\Omega)} \leq C \norm{(q_\ell-(q+c))u_\ell}_{L^{p}(\Omega)}.$$
Thus, applying the H\"older inequality and the Sobolev embedding theorem once more, we find that
$$\norm{v_\ell}_{W^{2,p}(\Omega)} \leq C \norm{q_\ell-(q+c)}_{L^{3n/5}} \norm{u_\ell}_{H^1(\Omega)}.$$
From this, \eqref{eq-s0}, \eqref{l3b} and  \eqref{l3bb} it then follows that 
$$\lim_{\ell \to +\infty} \norm{u_\ell-u}_{W^{2,p}(\Omega)}=0.$$

\noindent{\it Step 3: The sequence $(u_\ell)_{\ell \ge 1}$ is bounded in $W^{2,p_1}(\Omega)$, where $p_1=6n/(3n+4)$}. Let us recall from \eqref{222} that for all natural number $\ell$, the function $u_\ell$ solves
\begin{equation} 
\label{eq1}
 \left\{
\begin{array}{ll}
 -\Delta u_\ell=-q_\ell u_\ell+G  &\text{in} \;\Omega\\
    u_\ell=0  &\text{on} \;\Gamma,\\
 \end{array}
\right.
\end{equation}
where $G=cu +F$. Thus, by applying \cite[Theorem 2.4.2.5]{Gr} and taking into account that $L^2(\Omega)$ is continuously embedded in $L^{p_1}(\Omega)$, we obtain that
\begin{equation} 
\label{l3c}
\norm{u_\ell}_{W^{2,p_1}(\Omega)}\leq C_0 (\norm{q_\ell u_\ell}_{L^{p_1}(\Omega)}+\norm{G}_{L^2(\Omega)}).
\end{equation}
Further, we have
$$\norm{q_\ell u_\ell}_{L^{p_1}(\Omega)}\leq \norm{q_\ell}_{L^{\frac{6n}{10}}(\Omega)}\norm{u_\ell}_{L^{\frac{6n}{3n-6}}(\Omega)}\leq C_0 \norm{q_\ell}_{L^{\frac{3n}{5}}(\Omega)}\norm{u_\ell}_{H^1(\Omega)},$$
from H\"older's inequality and the Sobolev embedding theorem, and consequently
$$\norm{q_\ell u_\ell}_{L^{p_1}(\Omega)}\leq C(\norm{u}_{L^2(\Omega)}+\norm{F}_{L^2(\Omega)}),$$
from \eqref{eq-s0} and \eqref{l3b}. Putting this with \eqref{l3c}, we find that
\begin{equation} 
\label{l3d}
\norm{u_\ell}_{W^{2,p_1}(\Omega)}\leq C(\norm{u}_{L^2(\Omega)}+\norm{F}_{L^2(\Omega)}).
\end{equation}

\noindent{\it Step 4: End of the proof}. We aim to show that for all $\ell \ge 1$, we have
\begin{equation}
\label{eq-s4.1}
\norm{\partial_\nu u_\ell}_{L^2(\Gamma)} \leq C(\norm{u}_{L^2(\Omega)}+\norm{F}_{L^2(\Omega)}).
\end{equation}
For this purpose we introduce a vector field $\gamma \in \mathcal{C}^1(\overline{\Omega},\mathbb{R}^n)$ such that $\gamma_{|_{\Gamma}}=\nu$. Then, we multiply the first line of \eqref{222} by $\gamma\cdot \nabla u_\ell$, integrate over $\Omega$, and get
\begin{equation}
\label{145}
\int_\Omega (-\Delta u_\ell) \gamma\cdot \nabla u_\ell dx + \int_\Omega q_\ell\; u_\ell \gamma\cdot \nabla u_\ell dx =\int_\Omega c\;u \gamma \cdot \nabla u_\ell dx+\int_\Omega F \gamma \cdot \nabla u_\ell dx.
\end{equation}
The first term on the left-hand side of \eqref{145} is treated by the divergence formula:
\begin{equation}
\label{eq-s4.2}
\int_\Omega (\Delta u_\ell) \gamma\cdot \nabla u_\ell dx 
= \int_\Gamma | \partial_\nu u_\ell |^2 d\sigma -\int_\Omega \nabla(\gamma\cdot \nabla u_\ell)\cdot \nabla u_\ell dx.
\end{equation}
Next, writing $\gamma=(\gamma_1,\ldots,\gamma_n)^T$, we get through direct computation that
\begin{eqnarray}
\nabla(\gamma\cdot \nabla u_\ell)\cdot \nabla u_\ell &= & \sum_{i,j= 1}^n \left( \partial_i \left( \gamma_j \partial_j u_\ell \right) \right) \partial_i u_\ell 
\nonumber \\
&= & \sum_{i,j= 1}^n \left( (\partial_i \gamma_j) \partial_j u_\ell+\gamma_j \partial_i \partial_j u_\ell \right) \partial_i u_\ell \nonumber \\
&= & \sum_{i,j=1}^n (\partial_i \gamma_j) (\partial_j u_\ell) \partial_i u_\ell +\frac{1}{2} \gamma\cdot\nabla \abs{\nabla u_\ell}^2. \label{eq-s4.3}
\end{eqnarray}
Further, taking into account that $\gamma \cdot \nu=1$ on $\Gamma$ and $\abs{\nabla u_\ell}=\abs{\partial_\nu u_\ell}$ on $\Gamma$, we have $\int_\Omega \gamma \cdot\nabla \abs{\nabla u_\ell}^2 dx = \norm{\partial_\nu u_\ell}_{L^2(\Gamma)}^2 -\int_\Omega  ( \nabla \cdot \gamma) \abs{\nabla u_\ell}^2 dx$, and \eqref{eq-s4.2}-\eqref{eq-s4.3} then yield
$$\int_\Omega \Delta u_\ell (\gamma\cdot \nabla u_\ell) dx =\frac{1}{2} \norm{\partial_\nu u_\ell}_{L^2(\Gamma)}^2 +\int_\Omega H(x) \nabla u_\ell(x) dx,$$
where
$$H(x) X =-\sum_{i,j=1}^n (\partial_i \gamma_j)(x) X_j X_i + \frac{1}{2} \left( \nabla \cdot \gamma(x) \right) \abs{X}^2,\ X=(X_1,\ldots,X_n)\in\R^n,\ x \in \Omega. $$
From this and \eqref{145} it then follows that
\begin{equation}
\label{147}
\frac{1}{2} \norm{\partial_\nu u_\ell}_{L^2(\Gamma)}^2=-\int_\Omega H(x) \nabla u_\ell(x) dx+ \int_\Omega q_\ell u_\ell\ \gamma\cdot \nabla u_\ell dx -\int_\Omega G\ \gamma \cdot \nabla u_\ell dx.
\end{equation}
By H\"older's inequality, the second term on the right hand side of \eqref{147} is bounded as
\begin{eqnarray*}
\left| \int_\Omega q_\ell\; u_\ell\ \gamma\cdot \nabla u_\ell dx \right| &\leq & \norm{\gamma}_{L^\infty(\Omega)^n}\norm{q_\ell}_{L^{\frac{6n}{10}}(\Omega)}\norm{u_\ell}_{L^{\frac{6n}{3n-8}}(\Omega)}\norm{\nabla u_\ell}_{L^{\frac{6n}{3n-2}}(\Omega)}\\
&\leq & C\norm{q_\ell}_{L^{\frac{3n}{5}}(\Omega)}\norm{u_\ell}_{L^{\frac{6n}{3n-8}}(\Omega)}\norm{ u_\ell}_{W^{1,\frac{6n}{3n-2}}(\Omega)}.\end{eqnarray*}
Thus, we have
$\left|\int_\Omega q_\ell\ u_\ell\ \gamma\cdot \nabla u_\ell dx\right| \leq C \norm{q_\ell}_{L^{\frac{3n}{5}}(\Omega)}\norm{u_\ell}_{W^{2,p_1}(\Omega)}^2$
from the Sobolev embedding theorem (see e.g. \cite[Theorem 1.4.4.1]{Gr}), and consequently 
$$\left|\int_\Omega q_\ell\; u_\ell\ \gamma\cdot \nabla u_\ell dx\right|\leq C\left(\norm{u}_{L^2(\Omega)}+\norm{F}_{L^2(\Omega)}\right)^2,$$
from \eqref{l3d}. 
Putting this together with \eqref{l3b} and \eqref{147}, we obtain that
\begin{equation}
\label{ess}
\norm{\partial_\nu u_\ell}_{L^2(\Gamma)}^2
\leq C\left(\norm{u}_{L^2(\Omega)}+\norm{F}_{L^2(\Omega)}\right)^2.
\end{equation}
Therefore, the sequence $(\partial_\nu u_\ell)_{\ell \ge 1}$ is weakly convergent in $L^2(\Gamma)$, by Banach-Alaoglu's theorem. We denote by $w$ its weak limit in $L^2(\Gamma)$. On the other hand, $(u_\ell)_{\ell \ge 1}$ converges to $u$ in the norm-topology of $W^{2,p}(\Omega)$ according to {\it Step 2}, hence $(\partial_\nu u_\ell)_{\ell \ge 1}$ strongly converges to $\partial_\nu u$ in $L^p(\Gamma)$. Now, since $(\partial_\nu u_\ell)_{\ell\ \geq 1}$ converges to $w$ and to $\partial_\nu u$ in $D'(\Gamma)$, the space of distributions on $\Gamma$, we have $\partial_\nu u=w\in L^2(\Gamma)$ from the uniqueness of the limit, which proves the first claim of the result. Finally, \eqref{l3a} follows readily from \eqref{ess} and the weak convergence of $(\partial_\nu u_\ell)_{\ell \ge 1}$ to $\partial_\nu u$ in the Hilbert space $L^2(\Gamma)$.
\end{proof}

\begin{remark}
\label{rmk}
Proposition \ref{pr1} ensures us for all $q \in \cQ_c$, $c>0$, that all functions $u \in D(A_q)$ have a normal derivative $\partial_\nu u \in L^2(\Gamma)$ satisfying
\begin{equation}
\label{ess1} 
\norm{\partial_\nu u}_{L^2(\Gamma)}\leq C \norm{u}_{D(A_q)},
\end{equation}
where $C$ is a positive constant depending only on $\Omega$ and $q$. Here $\norm{\cdot}_{D(A_q)}$ denotes the usual operator norm  associated with $A_q$, i.e., $\norm{u}_{D(A_q)}=\norm{u}_{L^2(\Omega)}+\norm{A_q u}_{L^2(\Omega)}$ for all $u \in D(A_q)$.
\end{remark}

In particular, since all the eigenfunctions $\phi_k$, $k \geq 1$, lie in $D(A_q)$, we deduce from Remark \ref{rmk} that $\psi_k \in L^2(\Gamma)$ and that
\begin{equation}
\label{tm*} 
\norm{\psi_k}_{L^2(\Gamma)} \leq C(1+\abs{\lambda_k}).
\end{equation}
This nice features will prove to be useful for relating the normal derivative of the difference of two solutions to \eqref{1} associated with two arbitrary spectral parameters taken in the resolvent set of $A_q$, to the boundary spectral data of $A_q$. 

\begin{lemma}
\label{l5}
Let $q\in \cQ_c$, for some $c>0$. Pick $\lambda$ and $\mu$ in $\mathbb{C}\setminus \mathrm{Sp}(A_q)$, and for $f\in  H^{\frac{3}{2}}(\Gamma)$, denote by $u_\lambda$ (resp., $u_\mu$) the $H^1(\Omega)$-solution to \eqref{1} associated with $\lambda$ (resp., $\mu$). Then, we have
\begin{equation}
\label{l5a}
\partial_\nu (u_\lambda - u_\mu)=(\mu - \lambda) \sum_{k=1}^{+\infty}\frac{\langle f, \psi_k \rangle_{L^2(\Gamma)}}{(\lambda-\lambda_k)(\mu-\lambda_k)} \psi_k,
\end{equation}
the series being convergent in $L^2(\Gamma)$.
\end{lemma}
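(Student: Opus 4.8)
The plan is to identify the difference $w := u_\lambda - u_\mu$ with a resolvent of $A_q$ applied to $u_\mu$, to expand it along the eigenbasis of $A_q$, and then to differentiate the expansion term by term using the boundedness of the normal-trace map on $D(A_q)$ provided by Remark \ref{rmk}. First I would check that $w\in D(A_q)$ and solves a well-posed equation. Since $u_\lambda$ and $u_\mu$ are the $H^1(\Omega)$-solutions of \eqref{1} associated with $\lambda$ and $\mu$, they share the trace $f$ on $\Gamma$, so $w\in H_0^1(\Omega)$; moreover $(-\Delta+q)u_\lambda=\lambda u_\lambda$ and $(-\Delta+q)u_\mu=\mu u_\mu$ both lie in $L^2(\Omega)$ because $u_\lambda,u_\mu\in W^{2,p}(\Omega)\subset L^2(\Omega)$ (recall $p=2n/(n+2)$). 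Hence $(-\Delta+q)w=\lambda u_\lambda-\mu u_\mu\in L^2(\Omega)$, which shows $w\in D(A_q)$ and, after rearranging, $(A_q-\lambda)w=(\lambda-\mu)u_\mu$. As $\lambda\notin\mathrm{Sp}(A_q)$, this gives $w=(\lambda-\mu)(A_q-\lambda)^{-1}u_\mu$, and in particular $\partial_\nu w\in L^2(\Gamma)$ by Remark \ref{rmk}.

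Next I would expand $w$ along the orthonormal eigenbasis $(\phi_k)_k$. Set $c_k:=\langle w,\phi_k\rangle_{L^2(\Omega)}$. Because $w\in D(A_q)$, both $\sum_k|c_k|^2=\|w\|_{L^2(\Omega)}^2$ and $\sum_k\lambda_k^2|c_k|^2=\|A_qw\|_{L^2(\Omega)}^2$ are finite, so the partial sums $S_N:=\sum_{k=1}^N c_k\phi_k$ converge to $w$ simultaneously in $L^2(\Omega)$ and under the application of $A_q$, hence in the graph norm $\|\cdot\|_{D(A_q)}$. Since the normal-trace map $\partial_\nu\colon D(A_q)\to L^2(\Gamma)$ is bounded by \eqref{ess1}, applying it term by term yields $\partial_\nu w=\sum_{k=1}^{+\infty}c_k\psi_k$, with convergence in $L^2(\Gamma)$.

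It then remains to compute the $c_k$. Using self-adjointness of $A_q$ together with $(A_q-\lambda)^{-1}\phi_k=(\lambda_k-\lambda)^{-1}\phi_k$, one gets $c_k=(\lambda-\mu)(\lambda_k-\lambda)^{-1}\langle u_\mu,\phi_k\rangle_{L^2(\Omega)}$. The decisive identity is
\begin{equation}
\label{plan-green}
(\mu-\lambda_k)\langle u_\mu,\phi_k\rangle_{L^2(\Omega)}=\langle f,\psi_k\rangle_{L^2(\Gamma)},
\end{equation}
which I would derive from a Green formula for the pair $(\phi_k,u_\mu)$: since $(-\Delta+q)\phi_k=\lambda_k\phi_k$, $(-\Delta+q)u_\mu=\mu u_\mu$, $q$ is real, $\phi_k$ vanishes on $\Gamma$, $u_\mu=f$ and $\partial_\nu\phi_k=\psi_k$ on $\Gamma$, integration by parts produces exactly \eqref{plan-green}. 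Substituting \eqref{plan-green} into the formula for $c_k$ and using $\tfrac{\lambda-\mu}{(\lambda_k-\lambda)(\mu-\lambda_k)}=\tfrac{\mu-\lambda}{(\lambda-\lambda_k)(\mu-\lambda_k)}$ gives $c_k=(\mu-\lambda)\langle f,\psi_k\rangle_{L^2(\Gamma)}/[(\lambda-\lambda_k)(\mu-\lambda_k)]$, which is precisely \eqref{l5a}.

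The main obstacle is the rigorous justification of \eqref{plan-green}, that is, of the Green formula for $(\phi_k,u_\mu)$: when $q$ is singular neither $\phi_k\in D(A_q)$ nor $u_\mu\in W^{2,p}(\Omega)$ belongs to $H^2(\Omega)$, so the classical integration-by-parts identity does not apply directly and the boundary pairing $\langle f,\psi_k\rangle_{L^2(\Gamma)}$ must be interpreted with care. I would handle this as in the proof of Proposition \ref{pr1}: approximate $\phi_k$ by the $H^2(\Omega)\cap H_0^1(\Omega)$ solutions $\phi_{k,\ell}$ of the regularized problems with smooth potentials $q_\ell\to q+c$, for which integration by parts against $u_\mu\in W^{2,p}(\Omega)$ is licit, and then pass to the limit $\ell\to+\infty$ using that $\phi_{k,\ell}\to\phi_k$ in $W^{2,p}(\Omega)$ while $\partial_\nu\phi_{k,\ell}\rightharpoonup\psi_k$ weakly in $L^2(\Gamma)$. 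Verifying that every interior and boundary term converges under these modes of convergence, relying on the Hölder and Sobolev embeddings already used throughout Section \ref{sec-pre} to control products such as $q\phi_k\overline{u_\mu}$, is the technical heart of the argument.
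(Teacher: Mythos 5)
Your proposal is correct and follows essentially the same route as the paper's own proof: both identify $u_\lambda-u_\mu$ with $(\lambda-\mu)(A_q-\lambda)^{-1}u_\mu$, expand it in the eigenbasis of $A_q$ with convergence in the graph norm of $D(A_q)$, push the expansion through the normal-trace map via Remark \ref{rmk}, and compute the coefficients from the Green identity $\langle u_\mu,\phi_k\rangle_{L^2(\Omega)}=-\langle f,\psi_k\rangle_{L^2(\Gamma)}/(\lambda_k-\mu)$. The only divergence is one of presentation: where you propose to justify that Green identity for singular $q$ by regularizing the potential as in Proposition \ref{pr1}, the paper simply invokes the analogous computations of \cite[Lemma 2.3]{KKS} and \cite[Lemma 2.1]{Ki1}.
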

\begin{proof}  
Putting $v_{\lambda,\mu}:= u_\lambda-u_\mu$, it is easy to check that
\begin{equation}
\label{sys1}
 \left\{  
\begin{array}{ll}
 (-\Delta +q-\lambda)v_{\lambda,\mu}=(\lambda-\mu)u_{\mu} &\text{in}\ \Omega\\
   v_{\lambda,\mu}=0 &\text{on}\ \Gamma. \\          
 \end{array}                     
\right.
\end{equation}
Since $u_{\mu} \in L^2(\Omega)$ and $\lambda$ is in the resolvent set of $A_q$, we thus have 
$v_{\lambda,\mu}=(\lambda-\mu) (A_q-\lambda)^{-1} u_\mu$ and consequently
$$
v_{\lambda,\mu}=(\lambda-\mu) \sum_{k= 1}^{+\infty} \frac{\langle u_\mu , \phi_k\rangle_{L^2(\Omega)}}{\lambda_k-\lambda} \phi_k,
$$
where the series converges in $D(A_q)$. 
By continuity of the mapping $w \mapsto\partial_\nu w$ from $D(A_q)$ into $L^2(\Gamma)$, arising from \eqref{ess1}, the series
$\sum_{k= 1}^{+\infty} \frac{\langle u_\mu , \phi_k\rangle_{L^2(\Gamma)}}{\lambda_k-\lambda} \psi_k$ then converges in $L^2(\Gamma)$ and \begin{equation}
\label{sum}
\partial_\nu v_{\lambda,\mu}=(\lambda-\mu) \sum_{k= 1}^{+\infty} \frac{\langle u_\mu , \phi_k\rangle_{L^2(\Omega)}}{\lambda_k-\lambda} \psi_k.
\end{equation}
Further, by multiplying by $\overline{\phi_k}$ the first equation of \eqref{sys1} with $\lambda=\lambda_k$, integrating over $\Omega$ and applying the Green formula, we get in a similar fashion to \cite[Lemma 2.3]{KKS} or \cite[Lemma 2.1]{Ki1} that
$\langle u_\mu , \phi_k\rangle_{L^2(\Omega)}=-\frac{\left\langle f,\psi_k\right\rangle_{L^2(\Gamma)}}{\lambda_k-\mu}$. Finally,
\eqref{l5a} follows directly from this and \eqref{sum}.
\end{proof}

\section{Isozaki's asymptotic representation formula}
\label{sec-Isozaki}
In this section we aim to relate the Fourier transform of the difference $q_1-q_2$ of two potentials $q_j$, $j=1,2$, to the boundary spectral data of the Schr\"odinger operators $A_{q_j}$. This will be achieved by probing \eqref{1} with appropriately designed Dirichlet  
boundary data $f$ and collecting the Neumann response of the system. This idea, which is borrowed from the Born approximation method in scattering theory, was first applied to multidimensional inverse spectral analysis by Isozaki in \cite{I}.
This seminal article paved the way for numerous authors investigating inverse spectral problems (see e.g., \cite{BCFKS,CS,KKS,Ki1,P,S}) but in the context of this work, we shall essentially rely on \cite{P}, where Isozaki's approach was adapted to the framework of unbounded potentials.\\

\subsubsection{Test functions}
Let $\xi \in \mathbb{R}^n$. For all $\tau \ge \abs{\xi}$, we seek two test functions $f_\tau^\pm$ satisfying
\begin{equation}
\label{3.36}
( -\Delta -\lambda_{\tau}^\pm)f_\tau^\pm=0\ \mbox{in}\ \Omega,
\end{equation}
where $\lambda_\tau^\pm = (\tau \pm i)^2$, and such that
\begin{equation}
\label{2}
    \lim_{\tau \rightarrow +\infty} f_\tau^+(x) \overline{f_\tau^-(x)}=e^{-i\xi \cdot x},\; x\in \Omega,
\end{equation}
\begin{equation}
\label{3000}
    \sup_{\tau \ge \abs{\xi}} \norm{f_\tau^\pm}_{L^\infty(\Omega)} <\infty.
\end{equation}
For this purpose we pick $\eta \in \mathbb{S}^{n-1}$ such that $\xi \cdot \eta=0$ and for all $\tau  \ge \frac{\abs{\xi}}{2}$ we put
$$\beta_\tau = \sqrt{1-\frac{|\xi|^2}{4\tau^2}}\ \mbox{and}\ \eta_\tau^\pm = \beta_\tau \eta \mp \frac{\xi}{2\tau}, $$ 
in such a way that $\abs{\eta_\tau^\pm}=1$. Then, it is easy to check that the two functions
$$ f_\tau^\pm (x) := e^{i(\tau\pm i)\eta^{\pm}_{\tau} \cdot x},\; x\in \Omega,$$
satisfy the conditions \eqref{3.36} and \eqref{2}. Moreover, since 
$\abs{f_\tau^\pm (x)} \leq e^{\abs{x}}$ for all $x \in \overline{\Omega}$, we have
\begin{equation}
\label{3001}
\norm{f_\tau^\pm}_{L^r(X)} \leq \abs{X}^{1/r}  \sup_{x \in \overline{\Omega}} e^{\abs{x}},\ X=\Omega,\partial \Omega,
\end{equation}
whenever $r \in [2,+\infty)$ or $r=+\infty$. Notice that \eqref{3001} with $r=+\infty$ yields \eqref{3000}.

Let $q \in \cQ_c$. Then, for all $\tau \geq \abs{\xi}$ we have $q f_\tau^\pm \in L^2(\Omega)$ by \eqref{3001}, and the 
estimate
\begin{equation}
\label{tm1}
\norm{q f_\tau^\pm}_{L^2(\Omega)} \leq C \norm{q}_{L^{\max(2,3n \slash 5)}(\Omega)},
\end{equation}
where $C$ is a positive constant which is independent of $\tau$. As a matter of fact we have
$\norm{q f_\tau^\pm}_{L^2(\Omega)} \leq \norm{q}_{L^{2}(\Omega)} \norm{f_\tau^\pm}_{L^\infty(\Omega)} \leq C \norm{q}_{L^{2}(\Omega)}$
when $n=3$ and
$\norm{q f_\tau^\pm}_{L^2(\Omega)} \leq \norm{q}_{L^{3n \slash 5}(\Omega)} \norm{f_\tau^\pm}_{L^\frac{3n}{3n-6}(\Omega)} 
\leq C \norm{q}_{L^{3n \slash 5}(\Omega)}$
when $n \geq 4$.

\subsubsection{Probing the system with $f_\tau^\pm$}

For $j=1,2$, let $q_j \in \cQ_c$ and let $z\in \mathbb{C}\setminus \mathrm{Sp}(A_{q_j})$. We denote by $u_{j,z}^{\pm}$ the 
$W^{2,p}(\Omega)$-solution to the BVP
\begin{equation}
\label{tm0}
 \left\{
\begin{array}{ll}
( -\Delta +{{q_j}}-z)u = 0  &\text{in}\ \Omega,\\
u = f_{\tau}^\pm & \text{on}\ \Gamma.
 \end{array}
\right. 
\end{equation}
Since $( -\Delta +{{q_j}}-z)f^\pm_\tau =({{q_j}}+\lambda_{\tau}^\pm -z) f^\pm_\tau$ from \eqref{3.36}, the function
\begin{equation}
\label{vz1}
v_{j,z}^\pm := u_{j,z}^\pm-f_\tau^\pm
\end{equation}
 then solves
 $$
 \left\{
\begin{array}{ll}
( -\Delta +{{q_j}}-z) v = -( -\Delta +{{q_j}}-z)f^\pm_\tau  & \text{in}\ \Omega\\
v= 0 & \text{on}\ \Gamma,
 \end{array}
\right. 
$$
which amounts to saying that
\begin{equation}
\label{vz2}
v^\pm_{j,z}=-(A_{q_j}-z)^{-1}(q_j+\lambda^\pm_\tau-z)f_\tau^\pm. 
\end{equation}
In the special case where $z=\lambda_\tau^\pm$, the above identity reads $v_{j,\lambda_\tau^\pm}^\pm = -(A_{q_j}-\lambda_{\tau}^\pm)^{-1} (q_j f_\tau^\pm)$.
Since $\im\ \lambda_\tau^\pm =\pm 2 \tau$, we deduce from \eqref{tm1} that
\begin{equation}
\label{vz3}
\norm{v^\pm_{j,\lambda_\tau^\pm}}_{L^2(\Omega)} \leq C \norm{q_j}_{L^{\max(2,3n \slash 5)}(\Omega)} \tau^{-1},\ \tau \ge \abs{\xi},
\end{equation}
where the constant $C>0$ is independent of $\tau$. From this and \eqref{tm1} it then follows that 
$\abs{\int_\Omega v_{j,\lambda_\tau^+}^+ q_j f_\tau^-dx} \leq C^2 \norm{q_j}_{L^{\max(2,3n \slash 5)}(\Omega)}^2 \tau^{-1}$, which yields that
\begin{equation}
\label{l4a}
\lim_{\tau\to+\infty} \int_\Omega v_{j,\lambda_\tau^+}^+ q_j f_\tau^-dx=0.
\end{equation}
Armed with \eqref{l4a} we turn now to establish the Isozaki formula for the unbounded potentials $q_j$, $j=1,2$. 

\subsubsection{Isozaki's asymptotic formula}
For $\tau \geq \abs{\xi}$, we introduce
\begin{equation}
\label{2.16}
S_{j,\tau} := \langle \partial_\nu u_{j,\lambda_\tau^+}^+ ,f_\tau^- \rangle_{L^2(\Gamma)},\ j=1,2,
\end{equation}
where we recall from \eqref{vz1}-\eqref{vz2} that $u_{j,\lambda_\tau^+}^+=v_{j,\lambda_\tau^+}^+ + f_\tau^+$ and $v_{j,\lambda_\tau^+}^+=-(A_{q_j}-\lambda_\tau^+)^{-1} (q_j f_\tau^+)$.
Notice that since $v_{j,\lambda_\tau^+}^+ \in D(A_{q_j})$, we have $\partial_\nu u_{j,\lambda_\tau^+}^+ \in L^2(\Gamma)$ from Remark \ref{rmk}, and hence $S_{j,\tau}$ is well-defined.

Having seen this, we can extend the classical Isozaki formula to the case of unbounded potentials.

\begin{proposition}
\label{p1}
For $c>0$ fixed, let $q_j \in \cQ_c$, $j=1,2$. Then, for all $\xi \in \mathbb{R}^n$, it holds true that
\begin{equation}
\label{p1a}
\lim_{\tau \rightarrow +\infty} (S_{1,\tau}-S_{2,\tau}) =\int_\Omega (q_1-q_2) e^{-i\xi\cdot x} dx.
\end{equation}
\end{proposition}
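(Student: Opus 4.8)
The plan is to reduce each $S_{j,\tau}$, via Green's second identity, to a volume integral against the potential plus a boundary term that does not depend on $j$. The crucial observation is that, since $\overline{\lambda_\tau^-}=\lambda_\tau^+$, the conjugate $\overline{f_\tau^-}$ solves the free Helmholtz equation $(-\Delta-\lambda_\tau^+)\overline{f_\tau^-}=0$ in $\Omega$, that is, with exactly the same spectral parameter $\lambda_\tau^+$ as $u_{j,\lambda_\tau^+}^+$. First I would apply Green's second identity to the pair $(u_{j,\lambda_\tau^+}^+,\overline{f_\tau^-})$: using $\Delta\overline{f_\tau^-}=-\lambda_\tau^+\overline{f_\tau^-}$ and $\Delta u_{j,\lambda_\tau^+}^+=(q_j-\lambda_\tau^+)u_{j,\lambda_\tau^+}^+$, the two $\lambda_\tau^+$-terms cancel, and recalling that $u_{j,\lambda_\tau^+}^+=f_\tau^+$ on $\Gamma$ together with \eqref{2.16}, I would arrive at
\begin{equation*}
S_{j,\tau}=\int_\Omega q_j\, u_{j,\lambda_\tau^+}^+\,\overline{f_\tau^-}\,dx+\int_\Gamma f_\tau^+\,\partial_\nu\overline{f_\tau^-}\,d\sigma .
\end{equation*}

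Next I would observe that the boundary term $\int_\Gamma f_\tau^+\,\partial_\nu\overline{f_\tau^-}\,d\sigma$ depends only on the test functions, hence is the same for $j=1$ and $j=2$; it therefore drops out of the difference, leaving
\begin{equation*}
S_{1,\tau}-S_{2,\tau}=\int_\Omega\left(q_1 u_{1,\lambda_\tau^+}^+-q_2 u_{2,\lambda_\tau^+}^+\right)\overline{f_\tau^-}\,dx .
\end{equation*}
Substituting the decomposition $u_{j,\lambda_\tau^+}^+=f_\tau^++v_{j,\lambda_\tau^+}^+$ from \eqref{vz1} then splits the right-hand side into a principal term $\int_\Omega(q_1-q_2)f_\tau^+\overline{f_\tau^-}\,dx$ and a remainder $\int_\Omega\bigl(q_1 v_{1,\lambda_\tau^+}^+-q_2 v_{2,\lambda_\tau^+}^+\bigr)\overline{f_\tau^-}\,dx$.

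For the principal term, \eqref{2} gives $f_\tau^+\overline{f_\tau^-}\to e^{-i\xi\cdot x}$ pointwise, while \eqref{3000} and \eqref{3001} provide a uniform $L^\infty(\Omega)$-bound; since $q_1-q_2\in L^1(\Omega)$ (because $\Omega$ is bounded and $q_j\in L^2(\Omega)$), the dominated convergence theorem yields the convergence of this term to $\int_\Omega(q_1-q_2)e^{-i\xi\cdot x}\,dx$, which is the sought-after limit. For the remainder I would bound each summand by the Cauchy--Schwarz inequality, $\bigl|\int_\Omega q_j v_{j,\lambda_\tau^+}^+\overline{f_\tau^-}\,dx\bigr|\le \|v_{j,\lambda_\tau^+}^+\|_{L^2(\Omega)}\,\|q_j\overline{f_\tau^-}\|_{L^2(\Omega)}$, and then invoke \eqref{vz3} together with \eqref{tm1} (noting $\|q_j\overline{f_\tau^-}\|_{L^2(\Omega)}=\|q_j f_\tau^-\|_{L^2(\Omega)}$) to obtain a bound of order $\tau^{-1}$; this is exactly the vanishing recorded in \eqref{l4a}, so the remainder tends to $0$, and combining the two limits gives \eqref{p1a}.

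The main obstacle I anticipate is the rigorous justification of Green's second identity in the first step, since $q_j$ is unbounded and the solution $u_{j,\lambda_\tau^+}^+$ only belongs to $W^{2,p}(\Omega)$ with $p=2n/(n+2)<2$, so that the classical $H^2$ Green formula does not apply directly. This is precisely where Proposition \ref{pr1} and Remark \ref{rmk} enter: they guarantee that $\partial_\nu u_{j,\lambda_\tau^+}^+\in L^2(\Gamma)$, so that $S_{j,\tau}$ and all the boundary integrals above are well defined, and the identity itself I would establish by approximation, replacing $q_j$ by the smooth potentials $q_\ell$ constructed in the proof of Proposition \ref{pr1}, writing the classical Green identity for the corresponding $H^2$-solutions, and passing to the limit using the strong $W^{2,p}(\Omega)$-convergence of the solutions together with the weak $L^2(\Gamma)$-convergence of their normal derivatives established there.
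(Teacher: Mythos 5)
Your proposal is correct and takes essentially the same route as the paper: Green's identity applied to $u_{j,\lambda_\tau^+}^+$ and $\overline{f_\tau^-}$ (so that the $j$-independent boundary term cancels in the difference $S_{1,\tau}-S_{2,\tau}$), the splitting $u_{j,\lambda_\tau^+}^+=f_\tau^++v_{j,\lambda_\tau^+}^+$, dominated convergence for the principal term, and the decay estimate \eqref{l4a} for the remainder. Your final paragraph on justifying the Green formula by smooth approximation of $q_j$ addresses a regularity point the paper passes over silently, but it is a refinement of the same argument rather than a different approach.
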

\begin{proof}
Bearing in mind that
\begin{equation} 
\label{20}
 \left\{
\begin{array}{ll}
( -\Delta +{{q_j}}-\lambda_\tau^+)u_{j,\lambda_\tau^+}^+ = 0  & \text{in}\ \Omega\\
u_{j,\lambda_\tau^+}^+ = f_{\tau}^+ & \text{on}\ \Gamma,
 \end{array}
\right. 
\end{equation}
we multiply the first line of \eqref{20} by $\overline{f_\tau^-}$ and integrate on $\Omega$. Applying the Green formula, we obtain that
\begin{eqnarray*}
0 &= & \int_\Omega (-\Delta+q_j-\lambda_\tau^+)u^+_{j,\lambda_\tau^+}\overline{f_\tau^- (x)} dx \\
& = & \int_\Gamma f_\tau^+ \overline{ \partial_\nu f_\tau^-} d\sigma-\int_\Gamma (\partial_\nu u_{j,\lambda_{\tau}^+}^+) \overline{ f_\tau^-} d\sigma + \int_\Omega u_{j,\lambda_{\tau}^+}^+ \overline{ (-\Delta+q_j-\lambda_\tau^-)f_\tau^-} dx \\
& = & \int_\Gamma f_\tau^+ \overline{ \partial_\nu f_\tau^-} d\sigma - S_{j,\tau} + \int_\Omega u_{j,\lambda_{\tau}^+}^+ q_j \overline{f_\tau^-} dx,
\end{eqnarray*}
where we used \eqref{3.36} and \eqref{2.16} in the last line.
As a consequence we have
$$
S_{1,\tau}-S_{2,\tau}=\int_\Omega \big(q_1u_{1,\lambda_{\tau}^+}^+ - q_2 u_{2,\lambda_{\tau}^+}^+\big)\overline{f_\tau^-} dx,
$$
and hence
$$
S_{1,\tau} -S_{2,\tau} = \int_\Omega (q_1-q_2) f_\tau^+ \overline{f_\tau^-} dx + \int_\Omega q_1 v_{1,\lambda_{\tau}^+}^+ \overline{f_\tau^-} dx - \int_\Omega q_2 
v_{2,\lambda_{\tau}^+}^+ \overline{f_\tau^-} dx,
$$
from the identities $u_{j,\lambda_\tau^+}^+=v_{j,\lambda_\tau^+}^+ + f^+_\tau$, for $j=1,2$.
Taking the limit as $\tau \to +\infty$ in the above line then yields
\begin{equation}
\label{tm3}
\lim_{\tau\to+\infty} \left( S_{1,\tau}-S_{2,\tau} - \int_\Omega (q_1-q_2) f_\tau^+ \overline{f_\tau^-} dx \right)=0,
\end{equation}
with the aid of \eqref{l4a}. Finally, since $q_1-q_2 \in L^1(\Omega)$, we have 
$$\lim_{\tau\to+\infty} \int_\Omega (q_1-q_2) f_\tau^+ \overline{f_\tau^-} dx=\int_\Omega (q_1-q_2) e^{-i \xi \cdot x} dx$$ 
from \eqref{2} and the dominated convergence theorem, and the desired result follows from this and \eqref{tm3}.                        
\end{proof}

\section{Proof of Theorem \ref{t1}} 
\label{sec-proof}
We use the same notations as in Section \ref{sec-Isozaki}. Namely, for $z \in \mathbb{C}\setminus \mathrm{Sp}(A_{q_j})$, $j=1,2$, we consider 
the $W^{2,p}(\Omega)$-solution $u^+_{j,z}$ to the BVP \eqref{tm0}. Since $q_j f_\tau \in L^2(\Omega)$ according to  \eqref{tm1}, we have
$u_{j,z}^+-f_\tau^+ \in D(A_{q_j})$ from \eqref{vz1}-\eqref{vz2}, and hence $\partial_\nu u_{j,z}^+ \in L^2(\Gamma)$ by Remark \ref{rmk}. 
Therefore, for all $\mu \in \mathbb{C} \setminus \left( \mathrm{Sp}(A_{q_1}) \cup \mathrm{Sp}(A_{q_2}) \right)$ the normal derivative
of $v^+_{j,\lambda_\tau^+,\mu} := u^+_{j,\lambda_\tau^+}-u^+_{j,\mu}$ lies in $L^2(\Gamma)$ and we have
\begin{eqnarray}
S_{1,\tau}-S_{2,\tau} 
&= & \langle \partial_\nu u^+_{1,\lambda^+_\tau} -\partial_\nu u^+_{2,\lambda^+_\tau},f_\tau^- \rangle_{L^2(\Gamma)} \nonumber\\
& = & \langle \partial_\nu v^+_{1,\lambda^+_\tau,\mu} ,f_\tau^- \rangle_{L^2(\Gamma)}-\langle \partial_\nu v^+_{2,\lambda^+_\tau,\mu},f_\tau^- \rangle_{L^2(\Gamma)}+\langle \partial_\nu u^+_{1,\mu} -\partial_\nu u^+_{2,\mu},f_\tau^- \rangle_{L^2(\Gamma)}, \label{tm5}
\end{eqnarray}
according to \eqref{2.16}. 
Let us examine the last term on the right-hand-side of \eqref{tm5}. Applying H\"older's inequality, we obtain that
$$ \abs{\langle \partial_\nu u^+_{1,\mu} -\partial_\nu u^+_{2,\mu},f_\tau^- \rangle_{L^2(\Gamma)}} \leq \norm{\partial_\nu u^+_{1,\mu} -\partial_\nu u^+_{2,\mu}}_{L^p(\Gamma)} \norm{f_\tau^-}_{L^{p^\prime}(\Gamma)},$$
where $p^\prime=\frac{2n}{n-2}$ is the H\"older conjugate of $p=\frac{2n}{n+2}$.
Thus, we have
$\lim_{\mu \to -\infty} \langle \partial_\nu u^+_{1,\mu} -\partial_\nu u^+_{2,\mu},f_\tau^- \rangle_{L^2(\Gamma)} = 0$ by Lemma \ref{lemma 2.2}, and hence
\begin{equation}
\label{t1c}
S_{1,\tau}-S_{2,\tau}  = 
\lim_{\mu \rightarrow -\infty} \langle \partial_\nu v^+_{1,\lambda^+_\tau,\mu}-\partial_\nu v^+_{2,\lambda^+_\tau,\mu} ,f_\tau^- \rangle_{L^2(\Gamma)},
\end{equation}
from \eqref{tm5}.

Further, applying Lemma \ref{l5} we get through direct computation that 
\begin{equation}
\label{tm6}
\langle \partial_\nu v^+_{1,\lambda^+_\tau,\mu}-\partial_\nu v^+_{2,\lambda^+_\tau,\mu} ,f_\tau^- \rangle_{L^2(\Gamma)} 
=\sum_{k = 1}^\infty \left( A_k(\mu,\tau)+B_k(\mu,\tau)+C_k(\mu,\tau) \right),
\end{equation}
where
$$A_k(\mu,\tau):=\frac{\mu-\lambda^+_\tau}{(\lambda_\tau^+-\lambda_{1,k})(\mu - \lambda_{1,k})}
\langle f_\tau^+, \psi_{1,k}-\psi_{2,k}\rangle_{L^2(\Gamma)} \overline{\langle f^-_\tau ,\psi_{1,k} \rangle}_{L^2(\Gamma)},$$
$$B_k(\mu,\tau):=\frac{\mu-\lambda^+_\tau}{(\lambda_\tau^+-\lambda_{1,k})(\mu - \lambda_{1,k})} 
\langle f_\tau^+, \psi_{2,k}\rangle_{L^2(\Gamma)} \overline{\langle f^-_\tau ,\psi_{1,k}-\psi_{2,k}\rangle}_{L^2(\Gamma)}$$
and
$$C_k(\mu,\tau):=\left(\frac{\mu-\lambda^+_\tau}{(\lambda_\tau^+-\lambda_{1,k})(\mu - \lambda_{1,k})}-\frac{\mu-\lambda^+_\tau}{(\lambda_\tau^+-\lambda_{2,k})(\mu - \lambda_{2,k})}\right)\langle f_\tau^+, \psi_{2,k}\rangle_{L^2(\Gamma)}\overline{\langle f^-_\tau ,\psi_{1,k} \rangle}_{L^2(\Gamma)}.$$
Let us first examine $A_k(\mu,\tau)$ and $B_k(\mu,\tau)$. For this purpose we recall from \eqref{tm*} that the estimate
$\norm{\psi_{j,k}}_{L^2(\Gamma)}\leq C \left(1+\abs{\lambda_{j,k}} \right)$ holds for $j=1,2$ and all $k \geq 1$, where $C>0$ denotes a generic
constant which depends only on $\Omega$ and $q_j$.
This and \eqref{3001} then yield that
\begin{equation}
\label{tm7}
\abs{\langle f^\pm_\tau ,\psi_{j,k} \rangle_{L^2(\Gamma)}} \leq C \left(1+\abs{\lambda_{j,k}} \right),\ k \geq 1.
\end{equation}
Moreover, since
$\sup_{k\geq1} \abs{\lambda_{1,k}-\lambda_{2,k}}<\infty$ by \eqref{t1a}, we have
$$
\abs{\lambda_{2,k}} \leq C(1+\abs{\lambda_{1,k}}),\ k \geq 1. 
$$
From this, \eqref{3001} and \eqref{tm7}, it then follows for all $\mu \leq -(1+c)$ and all $\tau \geq 1 + \abs{\xi}$, that
\begin{equation}
\label{tm8}
\abs{A_k(\mu,\tau)} + \abs{B_k(\mu,\tau)} \leq C_\tau \norm{\psi_{1,k}-\psi_{2,k}}_{L^2(\Gamma)},\ k \geq 1.
\end{equation}
Here and below, $C_\tau$ is a positive constant independent of $k$ and $\mu$, which possibly depends on $\tau$ and may change from line to line.

Similarly, by rewriting
$\frac{\mu-\lambda^+_\tau}{(\lambda_\tau^+-\lambda_{1,k})(\mu - \lambda_{1,k})}-
\frac{\mu-\lambda^+_\tau}{(\lambda_\tau^+-\lambda_{2,k})(\mu - \lambda_{2,k})}$ as
$\frac{\lambda_{1,k}-\lambda_{2,k}}{(\lambda_\tau^+ - \lambda_{1,k})(\lambda_\tau^+ - \lambda_{2,k})}
- \frac{\lambda_{1,k}-\lambda_{2,k}}{(\mu - \lambda_{1,k})(\mu - \lambda_{2,k})}$
and using that $\sup_{k\geq1} \abs{\lambda_{1,k}-\lambda_{2,k}}<\infty$, we obtain for all $\mu \leq -(1+c)$ and $\tau \geq 1 + \abs{\xi}$ that
\begin{equation}
\label{tm9}
\abs{C_k(\mu,\tau)} \leq C_\tau \left| \frac{\langle f^-_\tau ,\psi_{1,k} \rangle_{L^2(\Gamma)}}{\lambda_\tau^+-\lambda_{1,k}} \right| \left| \frac{\langle f^+_\tau ,\psi_{2,k} \rangle_{L^2(\Gamma)}}{\lambda_\tau^--\lambda_{2,k}} \right|,\ k \geq 1.
\end{equation}
Moreover, remembering that $u_{j,\lambda_\tau^\pm}^\pm$ is the solution $u_{j,z}^\pm$ to \eqref{tm0} with $z=\lambda_\tau^\pm$, we find upon multiplying the first 
line of the corresponding BVP by $\overline{\phi_{j,k}}$, integrating over $\Omega$ and then applying the Green formula, that
$$\langle u_{j,\lambda_\tau^\pm}^\pm , \phi_{j,k} \rangle_{L^2(\Omega)}
=-\frac{\langle f_\tau^\pm, \psi_{j,k} \rangle_{L^2(\Gamma)}}{\lambda_\tau^\pm - \lambda_{j,k}},\ k \geq 1.$$
Parseval's formula thus yields
\begin{equation}
\label{tt1}
\sum_{k=1}^{+\infty} \left| \frac{\langle f^\pm_\tau ,\psi_{j,k} \rangle_{L^2(\Gamma)}}{\lambda_\tau^\pm-\lambda_{j,k}} \right|^2=
\norm{u_{j,\lambda_\tau^\pm}^\pm}_{L^2(\Omega)}^2<\infty,\ j=1,2.
\end{equation}
Putting this together with \eqref{tm9}, and combining \eqref{tm8} with \eqref{t1a}, we infer from \eqref{tm6} and the dominated convergence theorem that
\begin{equation}
\label{t1g}
S_{1,\tau}-S_{2,\tau}=\lim_{\mu \rightarrow -\infty} \langle \partial_\nu v^+_{1,\lambda^+_\tau,\mu}-\partial_\nu v^+_{2,\lambda^+_\tau,\mu} ,f_\tau^- \rangle_{L^2(\Gamma)}
=\sum_{k = 1}^{+\infty} \left( A_k^*(\tau)+B_k^*(\tau)+ C_k^* (\tau) \right),
\end{equation}
where
\begin{equation}
\label{tm10}
A_k^*(\tau):=\frac{1}{\lambda_\tau^+-\lambda_{1,k}} 
\langle f_\tau^+, \psi_{1,k}-\psi_{2,k}\rangle_{L^2(\Gamma)} \overline{\langle f^-_\tau ,\psi_{1,k} \rangle}_{L^2(\Gamma)},
\end{equation}
\begin{equation}
\label{tm11}
B_k^*(\tau):=\frac{1}{\lambda_\tau^+-\lambda_{1,k}} \langle f_\tau^+, \psi_{2,k}\rangle_{L^2(\Gamma)} \overline{\langle f^-_\tau ,\psi_{1,k}-\psi_{2,k}\rangle}_{L^2(\Gamma)}
\end{equation}
and
\begin{equation}
\label{tm12}
C_k^*(\tau):=\frac{\lambda_{1,k}-\lambda_{2,k}}{(\lambda_\tau^+-\lambda_{1,k})(\lambda_\tau^+-\lambda_{2,k})}
\langle f_\tau^+, \psi_{2,k} \rangle_{L^2(\Gamma)} \overline{\langle f^-_\tau ,\psi_{1,k} \rangle_{L^2(\Gamma)}}.
\end{equation}
Further, since $\im (\lambda_\tau^+ - \lambda_{j,k})=2\tau$ for all $k \ge 1$, it follows readily from \eqref{3001} that
$$
\abs{A_k^*(\tau)} + \abs{B_k^*(\tau)} \leq C \tau^{-1} \norm{\psi_{1,k}-\psi_{2,k}}_{L^2(\Gamma)} \left( \norm{\psi_{1,k}}_{L^2(\Gamma)} + \norm{\psi_{2,k}}_{L^2(\Gamma)} \right)
$$
and
$$
\abs{C_k^*(\tau)} \leq C \tau^{-2} \abs{\lambda_{1,k}-\lambda_{2,k}}  \norm{\psi_{1,k}}_{L^2(\Gamma)} \norm{\psi_{2,k}}_{L^2(\Gamma)},
$$
where $C$ is a positive constant which is independent of $k$ and $\tau$.
As a consequence we have
$$
\lim_{\tau\to+\infty}A_k^*(\tau)=\lim_{\tau\to+\infty}B_k^*(\tau)=\lim_{\tau\to+\infty}C_k^*(\tau)=0,\ k \ge 1,
$$
which together with \eqref{t1g} yields for any natural number $N$, that
\begin{equation}
\label{t1h}
\lim_{\tau\to+\infty} \abs{S_{1,\tau}-S_{2,\tau}} \leq \limsup_{\tau\to+\infty} \sum_{k = N}^{+\infty} \left( \abs{A_k^*(\tau)}+ \abs{B_k^*(\tau)}+ \abs{C_k^*(\tau)} \right).
\end{equation}
On the other hand, applying the Cauchy-Schwarz inequality in \eqref{tm10}, we get that
\begin{eqnarray}
\sum_{k = N}^{+\infty} 
\abs{A_k^*(\tau)} & \leq & \norm{f_\tau^+}_{L^2(\Omega)} \left( \sum_{k=1}^{+\infty} \left| \frac{\langle f^-_\tau ,\psi_{1,k} \rangle_{L^2(\Gamma)}}{\lambda_\tau^+-\lambda_{1,k}} \right|^2 \right)^{\frac{1}{2}} \left(\sum_{k=N}^{+\infty} \norm{\psi_{1,k}-\psi_{2,k}}_{L^2(\Gamma)}^2\right)^{\frac{1}{2}} \nonumber \\
&\leq &  \norm{f_\tau^+}_{L^2(\Omega)} \norm{u_{1,\lambda_\tau^+}^-}_{L^2(\Omega)}
\left(\sum_{k=N}^{+\infty} \norm{\psi_{1,k}-\psi_{2,k}}_{L^2(\Gamma)}^2\right)^{\frac{1}{2}},
\label{tm13}
\end{eqnarray}
from \eqref{tt1}. Next, since $\sup_{\tau \ge 1} \norm{f_\tau^+}_{L^2(\Omega)} \norm{u_{1,\lambda_\tau^+}^-}_{L^2(\Omega)}<\infty$ by virtue of
 \eqref{3001}, \eqref{vz1} and \eqref{vz3}, it follows from \eqref{tm13} that
\begin{equation}
\label{t1i}
\limsup_{\tau\to+\infty}\sum_{k = N}^{+\infty} \abs{A_k^*(\tau)} \leq C \left( \sum_{k=N}^\infty\norm{\psi_{1,k}-\psi_{2,k}}_{L^2(\Gamma)}^2 \right)^{\frac{1}{2}}
\end{equation}
for some constant $C>0$ which is independent of $N$.

Arguing as before with \eqref{tm11} and \eqref{tm12} instead of \eqref{tm10}, we find that 
$$
\limsup_{\tau\to+\infty}\sum_{k = N}^\infty \abs{B_k^*(\tau)}\leq C \left( \sum_{k=N}^{+\infty}\norm{\psi_{1,k}-\psi_{2,k}}_{L^2(\Gamma)}^2 \right)^{\frac{1}{2}}
$$
and that
$$
\limsup_{\tau\to+\infty}\sum_{k = N}^{+\infty} \abs{C_k^*(\tau)} \leq C\sup_{k\geq N}\abs{\lambda_{1,k}-\lambda_{2,k}},
$$
which together with \eqref{t1h} and \eqref{t1i} yields
\begin{equation}
\label{limsup}
\limsup_{\tau\to+\infty} \abs{S_{1,\tau}-S_{2,\tau}}\leq C\left(\sup_{k\geq N}\abs{\lambda_{1,k}-\lambda_{2,k}}+\left( \sum_{k=N}^{+\infty}\norm{\psi_{1,k}-\psi_{2,k}}_{L^2(\Gamma)}^2 \right)^\frac{1}{2} \right),
\end{equation}
where $C$ still denotes a positive constant independent of $N$. With reference to \eqref{t1a}, we thus get that $\lim_{\tau\to+\infty} \abs{S_{1,\tau}-S_{2,\tau}}=0$ by sending $N\to+\infty$ in \eqref{limsup}. This and \eqref{p1a} entail that 
\begin{equation}
\label{tm14}
\int_\Omega e^{-ix\cdot\xi}(q_1-q_2)dx=0.
\end{equation}
Finally, since $q_1-q_2\in L^1(\Omega)$ and since \eqref{tm14} holds for all $\xi\in\R^n$, we deduce from the injectivity of the Fourier transform that $q_1=q_2$. This terminates the proof of Theorem \ref{t1}.

\appendix

\section{The perturbed Dirichlet-Laplacian $-\Delta +q$ in $L^2(\Omega)$}
\label{app-A}
For $q \in \cQ_c$, $c>0$, let $a_q$ be the Hermitian form defined in \eqref{def-aq}.
Since $H^1(\Omega)$ is continuously embedded in $L^{\frac{2n}{n-2}}(\Omega)$ from the Sobolev embedding theorem,
we have $u \overline{v} \in L^{\frac{n}{n-2}}(\Omega) \subset L^{\frac{3n}{3n-5}}(\Omega)$ for all $(u,v) \in D(a_q) \times D(a_q)$, and
$$ \norm{u \overline{v}}_{L^{\frac{3n}{3n-5}}(\Omega)} \le C \norm{u \overline{v}}_{L^{\frac{n}{n-2}}(\Omega)} \leq C \norm{u}_{L^{\frac{2n}{n-2}}(\Omega)}
\norm{v}_{L^{\frac{2n}{n-2}}(\Omega)} \leq C  \norm{u}_{H^1(\Omega)} \norm{v}_{H^1(\Omega)}. $$
Here and below, $C$ denotes a generic positive constant which depends only on $n$ and $\Omega$.
Thus, $q u \overline{v} \in L^1(\Omega)$ and 
$$ \norm{q u \overline{v}}_{L^1(\Omega)} \leq \norm{q}_{L^\frac{3n}{5}(\Omega)}  \norm{u \overline{v}}_{L^{\frac{3n}{3n-5}}(\Omega)} \leq C \norm{q}_{L^\frac{3n}{5}(\Omega)}  \norm{u}_{H^1(\Omega)} \norm{v}_{H^1(\Omega)}, $$
by H\" older's inequality, showing that $a_q$ is continuous on $D(a_q) \times D(a_q)$:
$$ \abs{a_q(u,v)} \le \norm{\nabla u}_{L^2(\Omega)} \norm{\nabla v}_{L^2(\Omega)} + \norm{q u \overline{v}}_{L^1(\Omega)}
\le \left(1+C \norm{q}_{L^\frac{3n}{5}(\Omega)} \right) \norm{u}_{H^1(\Omega)} \norm{v}_{H^1(\Omega)}. $$
Denote by $A_q$ the unbounded operator generated by $a_q$, acting in $L^2(\Omega)$ on its domain $D(A_q) := \{ u \in H_0^1(\Omega),\ (-\Delta+q) u \in L^2(\Omega) \}$. 
Since 
$$a_q(u,u) + c \norm{u}_{L^2(\Omega)}^2 = \norm{\nabla u}_{L^2(\Omega)} \geq C \norm{u}_{H^1(\Omega)},\ u \in D(a_q), $$
by the Poincar\'e inequality, $D(A_q)$ is dense in $L^2(\Omega)$ and the operator $A_q$ is self-adjoint in $L^2(\Omega)$. 

\bigskip

\paragraph{\bf Acknowledgement}
Two of the authors, Y. Kian and \'E. Soccorsi, are partially supported by the Agence Nationale de la Recherche under grant ANR-17- CE40-0029.

\end{document}